\documentclass[12pt]{article}

\usepackage{amsfonts,amssymb,amsmath,amsthm,epsfig,euscript}

\usepackage{tikz}
\usetikzlibrary{matrix,trees,arrows}

\usetikzlibrary{positioning}
\usetikzlibrary{fit}
\usetikzlibrary{patterns}

\setlength{\textwidth}{6.4in}
\setlength{\textheight}{8.7in}
\setlength{\topmargin}{0pt}
\setlength{\headsep}{0pt}
\setlength{\headheight}{0pt}
\setlength{\oddsidemargin}{0pt}
\setlength{\evensidemargin}{0pt}

\newtheorem{theorem}{Theorem}

\newtheorem{proposition}{Proposition}

\theoremstyle{definition}
{

}

\long\def\symbolfootnote[#1]#2{\begingroup
\def\thefootnote{\fnsymbol{footnote}}\footnote[#1]{#2}\endgroup}


\newcommand{\red}[1][\sigma]{\mathrm{red}(#1)}

\newcommand{\sg}{\sigma}

\newcommand{\mmp}{\mathrm{mmp}}

\def\A{\mathcal{A}}


\newcommand{\fig}[2]{\begin{figure}[ht]
\centerline{\scalebox{.66}{\epsfig{file=#1.eps}}}
\caption{#2}
\label{fig:#1}
\end{figure}}


\newcommand{\shadetheboxes}[1]{
	\foreach \x/\y in {#1}
      	\fill[pattern color = black!65, pattern=north east lines] (\x,\y) rectangle +(1,1);
	}
	
\newcommand{\drawthegrid}[1]{
	\draw (0.01,0.01) grid (#1+0.99,#1+0.99);
	}
	
\newcommand{\drawtheclpattern}[1]{
	\foreach \x/\y in {#1}
      	\filldraw (\x,\y) circle (6pt);
	}

\newcommand{\drawspecialbox}[1]{
	\foreach \x/\y/\z/\w/\A in {#1}
		{
       		\fill[color = white!100, opacity=1, rounded corners = 1.5pt] (\x+0.125,\y+0.125) rectangle (\z-0.125,\w-0.125);
       		\draw[color = black, rounded corners = 1.5pt] (\x+0.125,\y+0.125) rectangle (\z-0.125,\w-0.125);
       		\fill[black] (\x/2+\z/2,\y/2+\w/2) node {$\scriptstyle\A$};
       	}
    }


\newcommand{\mmpattern}[5]{									
  \raisebox{0.6ex}{
  \begin{tikzpicture}[scale=0.35, baseline=(current bounding box.center), #1]
  \useasboundingbox (0.0,-0.1) rectangle (#2+1.4,#2+1.1);
    
    \shadetheboxes{#4}
    
    \drawthegrid{#2}
    
    \drawspecialbox{#5}
    
    \drawtheclpattern{#3}

  \end{tikzpicture}}
}

\title{Quadrant marked mesh patterns in alternating permutations}

\author{
Sergey Kitaev \\
\small School of Computer and Information Sciences\\[-0.8ex]
\small University of Strathclyde\\[-0.8ex]
\small Glasgow G1 1XH, United Kingdom\\[-0.8ex]
\small \texttt{sergey.kitaev@cis.strath.ac.uk}
\and
Jeffrey Remmel \\
\small Department of Mathematics\\[-0.8ex]
\small University of California, San Diego\\[-0.8ex]
\small La Jolla, CA 92093-0112. USA\\[-0.8ex]
\small \texttt{jremmel@ucsd.edu}
}

\date{\small Submitted: Date 1;  Accepted: Date 2;
 Published: Date 3.\\
\small MR Subject Classifications: 05A15, 05E05}

\begin{document}
\maketitle

\begin{abstract}
\noindent \

This paper is continuation of the systematic study of distribution of 
quadrant marked mesh patterns initiated in \cite{kitrem}. 
We study quadrant marked mesh patterns on 
up-down and down-up permutations, also known as alternating and reverse alternating permutations, respectively.  In particular, we refine classic enumeration results of Andr\'{e} \cite{Andre1,Andre2} on alternating permutations by showing that the distribution of the quadrant marked mesh pattern of interest is given by $(\sec(xt))^{1/x}$ on up-down permutations of even length and by $\int_0^t (\sec(xz))^{1+\frac{1}{x}}dz$ on down-up permutations of odd length. \\

\noindent {\bf Keywords:} permutation statistics, marked mesh pattern,
distribution 
\end{abstract}

\section{Introduction}

The notion of mesh patterns was introduced by Br\"and\'en and Claesson \cite{BrCl} to provide explicit expansions for certain permutation statistics as, possibly infinite, linear combinations of (classical) permutation patterns (see \cite{kit} for a comprehensive introduction to the theory of permutation patterns).  This notion was further studied in \cite{AKV,HilJonSigVid,kitrem,kitremtie1,kitremtie2,Ulf}. 

 Let $\sigma = \sg_1 \ldots \sg_n$ be a permutation in 
the symmetric group $S_n$ 
written in one-line notation. Then we will consider the 
graph of $\sg$, $G(\sg)$, to be the set of points $(i,\sg_i)$ for 
$i =1, \ldots, n$.  For example, the graph of the permutation 
$\sg = 471569283$ is pictured in Figure~\ref{fig:basic}.  Then if we draw a coordinate system centered at a 
point $(i,\sg_i)$, we will be interested in  the points that 
lie in the four quadrants I, II, III, and IV of that 
coordinate system as pictured 
in Figure \ref{fig:basic}.  For any $a,b,c,d \in  
\mathbb{N}$ where $\mathbb{N} = \{0,1,2, \ldots \}$ is the set of 
natural numbers and any $\sg = \sg_1 \ldots \sg_n \in S_n$, 
we say that $\sg_i$ matches the 
quadrant marked mesh pattern $MMP(a,b,c,d)$ in $\sg$ if in $G(\sg)$  relative 
to the coordinate system which has the point $(i,\sg_i)$ as its  
origin,  there are 
$\geq a$ points in quadrant I, 
$\geq b$ points in quadrant II, $\geq c$ points in quadrant 
III, and $\geq d$ points in quadrant IV.  
For example, 
if $\sg = 471569283$, the point $\sg_4 =5$  matches 
the quadrant marked mesh pattern $MMP(2,1,2,1)$ since relative 
to the coordinate system with origin $(4,5)$,  
there are 3 points in $G(\sg)$ in quadrant I, 
1 point in $G(\sg)$ in quadrant II, 2 points in $G(\sg)$ in quadrant III, and 2 points in $G(\sg)$ in 
quadrant IV.  Note that if a coordinate 
in $MMP(a,b,c,d)$ is 0, then there is no condition imposed 
on the points in the corresponding quadrant. In addition, one can 
consider patterns  $MMP(a,b,c,d)$ where 
$a,b,c,d \in \mathbb{N} \cup \{\emptyset\}$. Here when 
one of the parameters $a$, $b$, $c$, or $d$ in  
$MMP(a,b,c,d)$ is the empty set, then for $\sg_i$ to match  
$MMP(a,b,c,d)$ in $\sg = \sg_1 \ldots \sg_n \in S_n$, 
it must be the case that there are no points in $G(\sg)$ relative 
to coordinate system with origin $(i,\sg_i)$ in the corresponding 
quadrant. For example, if $\sg = 471569283$, the point 
$\sg_3 =1$ matches 
the marked mesh pattern $MMP(4,2,\emptyset,\emptyset)$ since relative 
to the coordinate system with origin $(3,1)$, 
there are 6 points in $G(\sg)$ in quadrant I, 
2 points in $G(\sg)$ in quadrant II, no  points in $G(\sg)$ in quadrant III, and no  points in $G(\sg)$ in quadrant IV.  We let 
$\mmp^{(a,b,c,d)}(\sg)$ denote the number of $i$ such that 
$\sg_i$ matches the marked mesh pattern $MMP(a,b,c,d)$ in $\sg$.

\fig{basic}{The graph of $\sg = 471569283$.}

Note how the (two-dimensional) notation of \'Ulfarsson \cite{Ulf} for marked mesh patterns corresponds to our (one-line) notation for quadrant marked mesh patterns. For example,

\[
MMP(0,0,k,0)=\mmpattern{scale=2.3}{1}{1/1}{}{0/0/1/1/k}\hspace{-0.25cm},\  MMP(k,0,0,0)=\mmpattern{scale=2.3}{1}{1/1}{}{1/1/2/2/k}\hspace{-0.25cm},
\]

\[
MMP(0,a,b,c)=\mmpattern{scale=2.3}{1}{1/1}{}{0/1/1/2/a} \hspace{-2.07cm} \mmpattern{scale=2.3}{1}{1/1}{}{0/0/1/1/b} \hspace{-2.07cm} \mmpattern{scale=2.3}{1}{1/1}{}{1/0/2/1/c} \ \mbox{ and }\ \ \ MMP(0,0,\emptyset,k)=\mmpattern{scale=2.3}{1}{1/1}{0/0}{1/0/2/1/k}\hspace{-0.25cm}.
\]

Kitaev and Remmel \cite{kitrem} studied the distribution of 
quadrant marked mesh patterns in the symmetric group $S_n$ 
and Kitaev, Remmel, and 
Tiefenbruck \cite{kitremtie1,kitremtie2} 
studied the distribution of quadrant marked mesh patterns 
in $132$-avoiding permutations in $S_n$.
The main goal of this paper is to study the distribution 
of the statistics $\mmp^{(1,0,0,0)}$, $\mmp^{(0,1,0,0)}$, $\mmp^{(0,0,1,0)}$, 
and $\mmp^{(0,0,0,1)}$ in the set of {\em up-down} and {\em down-up permutations}. 
We say 
that $\sg= \sg_1 \ldots \sg_n \in S_n$ is an up-down permutation if 
it is of the form
$$\sg_1 < \sg_2 > \sg_3< \sg_4 > \sg_5 < \cdots,$$ 
and $\sg$ is a down-up permutation if it is of the form
$$\sg_1 > \sg_2 < \sg_3 > \sg_4 < \sg_5 > \cdots. $$ 
Let  $UD_n$ denote the set of all up-down permutations in $S_n$ and 
$DU_n$ denote the set of all down-up permutations in $S_n$. 
Given a permutation $\sg = \sg_1 \ldots \sg_n \in S_n$, 
we define the reverse of $\sg$, $\sg^r$, to be 
$\sg_n \sg_{n-1} \ldots \sg_1$ and the complement of 
$\sg$, $\sg^c$, to be $(n+1-\sg_1)(n+1-\sg_2) \ldots (n+1 -\sg_n)$. 
For $n \geq 1$, we let 
\begin{eqnarray*}
A_{2n}(x) &=& \sum_{\sg \in UD_{2n}}x^{\mmp^{(1,0,0,0)}(\sg)}, \ \ \ \ \ \ \ 
B_{2n-1}(x) = \sum_{\sg \in UD_{2n-1}} x^{\mmp^{(1,0,0,0)}(\sg)}, \\
C_{2n}(x) &=& \sum_{\sg \in DU_{2n}}x^{\mmp^{(1,0,0,0)}(\sg)}, \ \mbox{and} \ 
D_{2n-1}(x) = \sum_{\sg \in DU_{2n-1}}x^{\mmp^{(1,0,0,0)}(\sg)}. 
\end{eqnarray*}
We then have the following simple proposition. 
\begin{proposition}\label{prop1} For all $n \geq 1$, 

\begin{itemize}
\item[\rm{(1)}] $\displaystyle  A_{2n}(x) = 
\sum_{\sg \in DU_{2n}} x^{\mmp^{(0,1,0,0)}(\sg)}  
= \sum_{\sg \in DU_{2n}}x^{\mmp^{(0,0,0,1)}(\sg)} 
= \sum_{\sg \in UD_{2n}}x^{\mmp^{(0,0,1,0)}(\sg)}$, 
\item[\rm{(2)}] $\displaystyle 
C_{2n}(x) = \sum_{\sg \in UD_{2n}}x^{\mmp^{(0,1,0,0)}(\sg)} 
= \sum_{\sg \in UD_{2n}}x^{\mmp^{(0,0,0,1)}(\sg)} = 
\sum_{\sg \in DU_{2n}}x^{\mmp^{(0,0,1,0)}(\sg)}$, 
\item[\rm{(3)}] $\displaystyle
B_{2n-1}(x) = \sum_{\sg \in UD_{2n-1}}x^{\mmp^{(0,1,0,0)}(\sg)} 
=  \sum_{\sg \in DU_{2n-1}}x^{\mmp^{(0,0,0,1)} (\sg)}
= \sum_{\sg \in DU_{2n-1}}x^{\mmp^{(0,0,1,0)}(\sg)}$, and 
\item[\rm{(4)}] $\displaystyle D_{2n-1}(x) = \sum_{\sg \in DU_{2n-1}}x^{\mmp^{(0,1,0,0)}(\sg)} 
= \sum_{\sg \in UD_{2n-1}}x^{\mmp^{(0,0,0,1)}(\sg)} 
= \sum_{\sg \in UD_{2n-1}}x^{\mmp^{(0,0,1,0)}(\sg)}$. 
\end{itemize}
\end{proposition}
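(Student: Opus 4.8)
The plan is to deduce all twelve equalities from the three elementary symmetries of permutation graphs: the reverse $\sigma \mapsto \sigma^r$, the complement $\sigma \mapsto \sigma^c$, and their composition $\sigma \mapsto (\sigma^r)^c$ (rotation of $G(\sigma)$ by $180^\circ$). Each is an involution on $S_n$, and each permutes the four quadrants about every point of the graph. Concretely, reversing reflects $G(\sigma)$ across a vertical axis, swapping the quadrants $\mathrm{I}\leftrightarrow\mathrm{II}$ and $\mathrm{III}\leftrightarrow\mathrm{IV}$; complementing reflects across a horizontal axis, swapping $\mathrm{I}\leftrightarrow\mathrm{IV}$ and $\mathrm{II}\leftrightarrow\mathrm{III}$; and $(\sigma^r)^c$ rotates, swapping $\mathrm{I}\leftrightarrow\mathrm{III}$ and $\mathrm{II}\leftrightarrow\mathrm{IV}$. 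First I would record the resulting statistic identities, valid for every $\sigma$ and all $a,b,c,d$:
\[
\mmp^{(a,b,c,d)}(\sigma) = \mmp^{(b,a,d,c)}(\sigma^r) = \mmp^{(d,c,b,a)}(\sigma^c) = \mmp^{(c,d,a,b)}((\sigma^r)^c).
\]
Specializing to $(a,b,c,d)=(1,0,0,0)$ sends $MMP(1,0,0,0)$ to $MMP(0,1,0,0)$ under reverse, to $MMP(0,0,0,1)$ under complement, and to $MMP(0,0,1,0)$ under rotation, which are precisely the four patterns appearing in the statement.

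Next I would track how these three maps interact with the up-down/down-up condition, and this is where the parity of the length enters. Complementing flips every comparison $\sigma_i < \sigma_{i+1} \leftrightarrow \sigma_i > \sigma_{i+1}$, so it exchanges $UD_n$ and $DU_n$ for all $n$. Reversing both reverses the sequence of comparisons and flips each one, so its effect depends on $n \bmod 2$: for even $n$ it exchanges $UD_n \leftrightarrow DU_n$, while for odd $n$ it fixes each class, $UD_n \to UD_n$ and $DU_n \to DU_n$. Consequently the rotation $(\sigma^r)^c$, being the composite, fixes each class for even $n$ and exchanges them for odd $n$.

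With these two tables in hand, each listed equality is obtained by choosing the one symmetry that simultaneously carries $MMP(1,0,0,0)$ to the desired pattern and maps the source class bijectively onto the target class. For instance, in part (1) the reverse gives a bijection $UD_{2n}\to DU_{2n}$ turning $\mmp^{(1,0,0,0)}$ into $\mmp^{(0,1,0,0)}$, the complement gives $UD_{2n}\to DU_{2n}$ turning it into $\mmp^{(0,0,0,1)}$, and the rotation gives $UD_{2n}\to UD_{2n}$ turning it into $\mmp^{(0,0,1,0)}$; since each map is a bijection between the relevant sets, the four generating functions coincide. Parts (2)--(4) follow the same template. One checks that in the odd-length cases (3) and (4) it is exactly the parity-dependent behaviour of reverse and rotation that routes $MMP(0,1,0,0)$ back to the same class and $MMP(0,0,1,0)$ to the opposite class.

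The argument carries no genuine analytic or combinatorial difficulty: every equality is a direct transport of a statistic along an involution. The only place that requires real care is the parity bookkeeping for the reverse map, since this is the sole point at which the odd- and even-length arguments diverge; I would therefore present the even-length cases (1)--(2) and the odd-length cases (3)--(4) in parallel, flagging at each use of reverse or rotation which class-preserving or class-exchanging behaviour is being invoked.
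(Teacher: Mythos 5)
Your proposal is correct and follows essentially the same route as the paper: both rest on the identity $\mmp^{(1,0,0,0)}(\sigma)=\mmp^{(0,1,0,0)}(\sigma^r)=\mmp^{(0,0,0,1)}(\sigma^c)=\mmp^{(0,0,1,0)}((\sigma^r)^c)$ together with tracking how reverse, complement, and their composition permute $UD_n$ and $DU_n$. You simply spell out the quadrant-swapping and the parity bookkeeping for the reverse map in more detail than the paper, which states these facts without elaboration.
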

\begin{proof}
It is easy to see that for any 
$\sg \in S_n$, $$\mmp^{(1,0,0,0)}(\sg)=\mmp^{(0,1,0,0)}(\sg^r)= 
\mmp^{(0,0,0,1)}(\sg^c)= \mmp^{(0,0,1,0)}((\sg^r)^c).$$ 
Then part 1 easily follows since   
$$ \sg \in UD_{2n} \iff \sg^r \in DU_{2n} \iff \sg^c \in DU_{2n} \iff 
(\sg^r)^c \in UD_{2n}.$$ 

Parts 2, 3, and 4 are proved in a similar manner. 
\end{proof}

It follows from Propostion \ref{prop1} that the study of the distribution 
of the statistics $\mmp^{(1,0,0,0)}$, $\mmp^{(0,1,0,0)}$, $\mmp^{(0,0,1,0)}$, 
and $\mmp^{(0,0,0,1)}$ in the set of up-down and down-up permutations 
can be reduced to the study of the following generating functions: 
\begin{eqnarray*}
A(t,x) &=& 1+ \sum_{n\geq 1} A_{2n}(x) \frac{t^{2n}}{(2n)!}, \\
B(t,x) &=& \sum_{n\geq 1} B_{2n-1}(x)\frac{t^{2n-1}}{(2n-1)!}, \\
C(t,x) &=& 1+ \sum_{n\geq 1} C_{2n}(x) \frac{t^{2n}}{(2n)!}, \ \mbox{and} \\
D(t,x) &=& \sum_{n\geq 1} D_{2n-1}(x) \frac{t^{2n-1}}{(2n-1)!}.
\end{eqnarray*} 

In the case when $x=1$, these generating functions are well known. 
That is, the operation of complementation shows that 
 $A_{2n}(1) = C_{2n}(1)$ and $B_{2n-1}(1) = D_{2n-1}(1)$ for 
all $n \geq 1$ and Andr\'{e} \cite{Andre1,Andre2}
proved that
\begin{equation*}
\sum_{n\geq 0} A_{2n}(1) \frac{t^{2n}}{(2n)!} = \sec(t)
\end{equation*}
and 
\begin{equation*}
\sum_{n\geq 1} B_{2n-1}(1) \frac{t^{2n-1}}{(2n-1)!} = \tan(t).
\end{equation*}
Thus, the number of up-down permutations is given by  the following exponential generating function 
\begin{equation}\label{cool}
\sec(t)+\tan(t)=\tan\left(\frac{t}{2}+\frac{\pi}{4}\right).
\end{equation}

We shall prove the following theorem.

\begin{theorem}\label{thm:main} We have
\begin{eqnarray*}
A(t,x) &=& (\sec(xt))^{1/x}, \\
B(t,x) &=& (\sec(xt))^{1/x} \int_0^t (\sec(xz))^{-1/x} dz, \\
C(t,x)&=& 1 + \int_0^t (\sec(xy))^{1+\frac{1}{x}}\int_0^y (\sec(xz))^{1/x} dz\ dy, \ \mbox{and}\\
D(t,x) &=& \int_0^t (\sec(xz))^{1+\frac{1}{x}}dz.
\end{eqnarray*}
\end{theorem}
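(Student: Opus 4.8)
The plan is to reinterpret the statistic so that it becomes local, then decompose each alternating permutation at its largest entry, which converts the four enumeration problems into a coupled system of first‑order ODEs in $t$ that can be integrated successively.

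First I would note that $\sg_i$ matches $MMP(1,0,0,0)$ in $\sg$ exactly when there is a $j>i$ with $\sg_j>\sg_i$, i.e. exactly when $\sg_i$ is \emph{not} a right‑to‑left maximum. Thus $\mmp^{(1,0,0,0)}(\sg)$ counts the non‑right‑to‑left‑maxima of $\sg$. The decisive feature is locality: whether position $i$ matches depends only on the entries strictly to its right and on their relative order. Consequently, if we delete everything to the left of a suffix of $\sg$ and standardize, the match‑status of every entry of that suffix is unchanged, so any such suffix contributes precisely its own $\mmp^{(1,0,0,0)}$‑generating function.

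Next I would decompose $\sg$ at its largest entry $N$. In an up‑down permutation $N$ sits at an even (peak) position $2k$, and in a down‑up permutation at an odd (peak) position $2k-1$; in each case this writes $\sg$ as $(\text{prefix})\,N\,(\text{suffix})$ with prefix and suffix alternating of prescribed type and length. Every prefix entry has $N$ to its right, so \emph{every} prefix entry matches; hence the prefix contributes the factor $x^{(\text{prefix length})}$ times the plain count of alternating permutations of that length, while by locality the suffix contributes its statistic‑generating function. Summing over the position of $N$ and converting the resulting binomial convolutions to exponential generating functions — using, with $a_m=|UD_m|=|DU_m|$, that $\sum_{m\ \mathrm{odd}} a_m\frac{(xt)^m}{m!}=\tan(xt)$ and $\sum_{m\ \mathrm{even}} a_m\frac{(xt)^m}{m!}=\sec(xt)$ from André \cite{Andre1,Andre2} (the scaling $t\mapsto xt$ being exactly the prefix weight $x^{(\text{length})}$) — yields, with primes denoting $\partial/\partial t$,
\[
A'=\tan(xt)\,A,\qquad B'=1+\tan(xt)\,B,\qquad C'=\sec(xt)\,B,\qquad D'=\sec(xt)\,A,
\]
subject to $A(0,x)=C(0,x)=1$ and $B(0,x)=D(0,x)=0$. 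The $+1$ in the equation for $B$ is the contribution of the one‑entry permutation (whose suffix after $N$ is empty), and $B,D$ enter the down‑up equations because the suffix after the peak of a down‑up permutation begins with an ascent and is therefore up‑down.

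Finally I would integrate the system in the order $A,D,B,C$. Since $\partial_t\log A=\tan(xt)$ with $A(0,x)=1$, integration gives $A=(\sec(xt))^{1/x}$; then $D=\int_0^t \sec(xz)A(z,x)\,dz=\int_0^t(\sec(xz))^{1+1/x}\,dz$. For $B$, because $A'/A=\tan(xt)$ the function $1/A$ is an integrating factor, so $(B/A)'=1/A$ and $B=A\int_0^t A^{-1}=(\sec(xt))^{1/x}\int_0^t(\sec(xz))^{-1/x}dz$; one further integration of $C'=\sec(xt)B$, substituting this expression for $B$, produces the double‑integral expression for $C$ in Theorem~\ref{thm:main}. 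The routine part is this ODE integration; the main obstacle is the combinatorial bookkeeping behind the recursions — rigorously establishing the locality/standardization lemma that makes the suffix contribute exactly its own generating function, verifying that each prefix contributes the clean factor $x^{(\text{length})}\times(\text{count})$, and tracking the peak‑parity of $N$ together with the boundary cases (empty prefix or suffix, and the single‑entry contribution giving the $+1$) correctly across all four of the up‑down/down‑up, even/odd combinations.
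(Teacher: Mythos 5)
Your proposal is correct and follows essentially the same route as the paper: decompose at the position of the maximum, observe that every prefix entry matches $MMP(1,0,0,0)$ while the suffix contributes its own generating function, convert the resulting binomial convolutions into the four first-order ODEs (identical to the paper's), and integrate. The only blemish is the phrase ``$B,D$ enter the down-up equations,'' which should read ``$A,B$'' (as your displayed equations $C'=\sec(xt)B$ and $D'=\sec(xt)A$ correctly show).
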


As an immediate corollary to Theorem \ref{thm:main} we get, for example, that the number of up-down permutations by occurrences of $MMP(1,0,0,0)$ is given by
$$A(t,x)+B(t,x)=(\sec(xt))^{1/x} \left(1+\int_0^t (\sec(xz))^{-1/x} dz\right)$$
which refines  (\ref{cool}).

One can use these generating functions to find some initial values 
of the polynomials $A_{2n}(x)$, $B_{2n-1}(x)$, $C_{2n}(x)$,  and 
$D_{2n-1}(x)$. For example, we have used Mathematica to compute 
 the following tables. \\
\ \\
\begin{tabular}{|l|l|}
\hline
$n$ &  $A_{2n}(x)$ \\
\hline
0 & 1 \\
\hline
1 & x\\
\hline
2 & $x^2 (3+2 x)$ \\
\hline
3 & $x^3 \left(15+30 x+16 x^2\right)$\\
\hline
4 & $x^4 \left(105+420 x+588 x^2+272 x^3\right)$\\
\hline
5 & $x^5 \left(945+6300 x+16380 x^2+18960
x^3+7936 x^4\right)$\\
\hline
6 & $x^6 \left(10395+103950 x+429660 x^2+893640 x^3+911328 x^4+
353792 x^5\right)$\\
\hline
\end{tabular} \\
\ \\
\ \\
\begin{tabular}{|l|l|}
\hline
$n$ &  $B_{2n-1}(x)$ \\
\hline 
1& 1 \\
\hline
2 & $2 x$\\
\hline
3 & $8 x^2 (1+x)$ \\
\hline 
4 & $16 x^3 \left(3+8 x+6 x^2\right)$ \\
\hline
5& $128 x^4 \left(3+15 x+27 x^2+17 x^3\right)$ \\
\hline 
6 & $256 x^5 \left(15+120 x+381
x^2+556 x^3+310 x^4\right)$\\
\hline
7 & $1024 x^6 \left(45+525 x+2562 x^2+6420 x^3+8146 x^4+4146 x^5\right)$\\
\hline
\end{tabular}\\
\ \\
\ \\
\begin{tabular}{|l|l|}
\hline
$n$ &  $C_{2n}(x)$ \\
\hline
0 & 1 \\
\hline
1 & 1 \\
\hline 
2 & $x (2+3 x)$\\
\hline 
3 & $x^2 \left(8+28 x+25 x^2\right)$ \\
\hline 
4 & $x^3 \left(48+296 x+614 x^2+427 x^3\right)$ \\
\hline 
5 & $x^4 \left(384+3648 x+13104 x^2+20920
x^3+12465 x^4\right)$\\
\hline 
6 & $x^5 \left(3840+51840 x+282336 x^2+769072 x^3+
1039946 x^4+555731 x^5\right)$
\\
\hline 
\end{tabular}\\
\ \\
\ \\
\begin{tabular}{|l|l|}
\hline
$n$ &  $D_{2n-1}(x)$ \\
\hline
1 & 1 \\
\hline 
2 & $x (1+x)$ \\
\hline 
3 & $x^2 \left(3+8 x+5 x^2\right)$ \\
\hline 
4 & $x^3 \left(15+75 x+121 x^2+61 x^3\right)$ \\
\hline 
5 & $x^4 \left(105+840 x+2478 x^2+3128 x^3+1385
x^4\right)$ \\
\hline 
6 & $x^5 \left(945+11025 x+51030 x^2+115350 x^3+124921 x^4+50521 x^5\right)$ \\
\hline 
7 & $x^6 \left(10395+166320 x+1105335 x^2+3859680 x^3+7365633 x^4+7158128
x^5+2702765 x^6\right)$\\
\hline
\end{tabular}
\ \\

The outline of this paper is as follows.  In Section 2, we shall 
prove Theorem \ref{thm:main}. Then in Section 3, we shall study the entries of the tables above explaining them either explicitly or through recursions. 

\section{Proof of Theorem \ref{thm:main}}

The proof of all parts of Theorem \ref{thm:main} proceed 
in the same manner. That is, there are simple  
recursions satisfied by the polynomials 
$A_{2n}(x)$, $B_{2n+1}(x)$, $C_{2n}(x)$, and $D_{2n+1}(x)$ based on the 
position of the largest value in the permutation. 

\subsection{The generating function $A(t,x)$}

If $\sg = \sg_1 \ldots \sg_{2n} \in UD_{2n}$, then  
$2n$ must occur in one of the positions $2,4, \ldots , 2n$.  Let 
$UD_{2n}^{(2k)}$ denote the set of permutations $\sg \in UD_{2n}$ 
such that $\sg_{2k} = 2n$.  A schematic diagram of an element
in $UD_{2n}^{(2k)}$ is pictured in Figure \ref{fig:ud2k2n}.

\fig{ud2k2n}{The graph of a $\sg \in UD_{2n}^{(2k)}$.}

Note that there are $\binom{2n-1}{2k-1}$ ways 
to pick the elements which occur to the left of position $2k$ in such 
$\sg$ and there are $B_{2k-1}(1)$ ways to order them since 
the elements to the left of position $2k$ form an up-down permutation 
of length $2k-1$. Each of the elements to the left of position 
$2k$ contributes to $\mmp^{(1,0,0,0)}(\sg)$.  Thus the contribution 
of the elements to the left of position $2k$ in 
$\sum_{\sg \in UD_{2n}^{(2k)}} x^{\mmp^{(1,0,0,0)}(\sg)}$ is 
$B_{2k-1}(1)x^{2k-1}$.  There are 
$A_{2n-2k}(1)$ ways to order the elements to 
the right of position $2k$ since they must form an 
up-down permutation  of length $2n-2k$. Since the elements 
to the left of position $2k$ have no effect on whether an element 
to the right of position $2k$ contributes to $\mmp^{(1,0,0,0)}(\sg)$, it 
follows that the contribution 
of the elements to the right of position $2k$ in 
$\sum_{\sg \in UD_{2n}^{(2k)}} x^{\mmp^{(1,0,0,0)}(\sg)}$ is 
$A_{2n-2k}(x)$. It thus follows that 
\begin{equation*}
A_{2n}(x) = \sum_{k=1}^n \binom{2n-1}{2k-1} B_{2k-1}(1)x^{2k-1} A_{2n-2k}(x)
\end{equation*}
or, equivalently,  
\begin{equation}\label{Arec2}
\frac{A_{2n}(x)}{(2n-1)!} = \sum_{k=1}^n \frac{B_{2k-1}(1)x^{2k-1}}{(2k-1)!} 
\frac{A_{2n-2k}(x)}{(2n-2k)!}.
\end{equation}
Multiplying both sides of (\ref{Arec2}) by $t^{2n-1}$ and summing 
for $n \geq 1$, we see that 
$$\sum_{n \geq 1} \frac{A_{2n}(x)t^{2n-1}}{(2n-1)!} = 
\left(\sum_{n \geq 1}  \frac{B_{2n-1}(1)x^{2n-1}t^{2n-1}}{(2n-1)!}\right) 
\left(\sum_{n \geq 0} \frac{A_{2n}(x)t^{2n}}{(2n)!}\right).$$ 
By Andr\'e's result, 
$$\sum_{n \geq 1}  \frac{B_{2n-1}(1)x^{2n-1}t^{2n-1}}{(2n-1)!} = 
\tan(xt)$$ 
so that 
\begin{equation*}\label{Adiff}
\frac{\partial}{\partial t} A(t,x) = \tan(xt) A(t,x).
\end{equation*}
Our initial condition is that $A(0,x)=1$.  It is easy to check 
that the solution to this differential equation is 
\begin{equation*}\label{Afin}
A(t,x) = (\sec(xt))^{1/x}.
\end{equation*}

\subsection{The generating function $B(t,x)$}

If $\sg = \sg_1 \ldots \sg_{2n+1} \in UD_{2n+1}$, then  
$2n+1$ must occur in one of the positions $2,4, \ldots , 2n$.  Let 
$UD_{2n+1}^{(2k)}$ denote the set of permutations $\sg \in UD_{2n+1}$ 
such that $\sg_{2k} = 2n+1$.  A schematic diagram of an element
in $UD_{2n}^{(2k)}$ is pictured in Figure \ref{fig:ud2k2n+1}. 

\fig{ud2k2n+1}{The graph of a $\sg \in UD_{2n+1}^{(2k)}$.}

Again there are $\binom{2n}{2k-1}$ ways 
to pick the elements which occur to the left of position $2k$ in such 
$\sg$ and  the contribution 
of the elements to the left of position $2k$ in 
$\sum_{\sg \in UD_{2n+1}^{(2k)}} x^{\mmp^{(1,0,0,0)}(\sg)}$ is 
$B_{2k-1}(1)x^{2k-1}$.  There are 
$B_{2n-2k+1}(1)$ ways to order the elements to 
the right of position $2k$ since they must form an 
up-down permutation  of length $2n-2k+1$. Since the elements 
to the left of position $2k$ have no effect on whether an element 
to the right of position $2k$ contributes to $\mmp^{(1,0,0,0)}(\sg)$, it 
follows that the contribution 
of the elements to the right of position $2k$ in 
$\sum_{\sg \in UD_{2n+1}^{(2k)}} x^{\mmp^{(1,0,0,0)}(\sg)}$ is 
$B_{2n-2k+1}(x)$. It thus follows that if $n \geq 1$, then 
\begin{equation*}\label{Brec1}
B_{2n+1}(x) = \sum_{k=1}^n \binom{2n}{2k-1} B_{2k-1}(1)x^{2k-1} B_{2n-2k+1}(x).
\end{equation*}
Hence for $n\geq 1$, 
\begin{equation}\label{Brec2}
\frac{B_{2n+1}(x)}{(2n)!} = \sum_{k=1}^n \frac{B_{2k-1}(1)x^{2k-1}}{(2k-1)!} 
\frac{B_{2n-2k+1}(x)}{(2n-2k+1)!}.
\end{equation}
Multiplying both sides of (\ref{Brec2}) by $t^{2n}$, summing 
for $n \geq 1$, and taking into account that $B_1(x)=1$, we see that 
$$\sum_{n \geq 0} \frac{B_{2n+1}(x)t^{2n}}{(2n)!} = 1+ 
\left(\sum_{n \geq 0}  \frac{B_{2n+1}(1)x^{2n+1}t^{2n+1}}{(2n+1)!}\right) 
\left(\sum_{n \geq 0} \frac{B_{2n+1}(x)t^{2n+1}}{(2n+1)!}\right).$$ 
Since  
$$\sum_{n \geq 1}  \frac{B_{2n-1}(1)x^{2n-1}t^{2n-1}}{(2n-1)!} = 
\tan(xt),$$ 
we see that  
\begin{equation*}\label{Bdiff}
\frac{\partial}{\partial t} B(t,x) = 1+\tan(xt) B(t,x).
\end{equation*}
Our initial condition is that $B(0,x)=0$.  It is easy to check 
that the solution to this differential equation is 
\begin{equation*}\label{Bfin}
B(t,x) = (\sec(xt))^{1/x}\int_0^t (\sec(xz))^{-1/x}dz.
\end{equation*}

\subsection{The generating function $C(t,x)$}

If $\sg = \sg_1 \ldots \sg_{2n} \in DU_{2n}$, then  
$2n$ must occur in one of the positions $1,3, \ldots , 2n-1$.  Let 
$DU_{2n}^{(2k+1)}$ denote the set of permutations $\sg \in DU_{2n}$ 
such that $\sg_{2k+1} = 2n$.  A schematic diagram of an element
in $DU_{2n}^{(2k+1)}$ is pictured in Figure \ref{fig:du2k2n}.

\fig{du2k2n}{The graph of a $\sg \in DU_{2n}^{(2k+1)}$.}

Note that there are $\binom{2n-1}{2k}$ ways 
to pick the elements which occur to the left of position $2k+1$ in such 
$\sg$ and there are $C_{2k}(1)=A_{2k}(1)$ ways to order them since 
the elements to the left of position $2k+1$ form a down-up permutation 
of length $2k$.  Each of the elements to the left of position 
$2k+1$ contributes to $\mmp^{(1,0,0,0)}(\sg)$.  Thus the contribution 
of the elements to the left of position $2k+1$ in 
$\sum{\sg \in UD_{2n}^{(2k)}} x^{\mmp^{(1,0,0,0)}(\sg)}$ is 
$A_{2k}(1)x^{2k}$.  There are 
$B_{2n-2k-1}(1)$ ways to order the elements to 
the right of position $2k+1$ since they must form an 
up-down permutation  of length $2n-2k+1$. Since the elements 
to the left of position $2k+1$ have no effect on whether an element 
to the right of position $2k+1$ contributes to $\mmp^{(1,0,0,0)}(\sg)$, it 
follows that the contribution 
of the elements to the right of position $2k$ in 
$\sum_{\sg \in UD_{2n}^{(2k)}} x^{\mmp^{(1,0,0,0)}(\sg)}$ is 
$B_{2n-2k-1}(x)$. It thus follows that 
\begin{equation*}\label{Crec1}
C_{2n}(x) = \sum_{k=0}^{n-1} \binom{2n-1}{2k} A_{2k}(1)x^{2k} B_{2n-2k-1}(x), 
\end{equation*} 
or, equivalently, 
\begin{equation}\label{Crec2}
\frac{C_{2n}(x)}{(2n-1)!} = \sum_{k=0}^{n-1} \frac{A_{2k}(1)x^{2k}}{(2k)!} 
\frac{B_{2n-2k-1}(x)}{(2n-2k-1)!}.
\end{equation}
Multiplying both sides of (\ref{Crec2}) by $t^{2n-1}$ and summing 
for $n \geq 1$, we see that 
$$\sum_{n \geq 1} \frac{C_{2n}(x)t^{2n-1}}{(2n-1)!} = 
\left(\sum_{n \geq 1}  \frac{B_{2n-1}(1)x^{2n-1}t^{2n-1}}{(2n-1)!}\right) 
\left(\sum_{n \geq 0} \frac{A_{2n}(x)t^{2n}}{(2n)!}\right).$$ 
By Andr\'e's result, 
$$\sum_{n \geq 0}  \frac{A_{2n}(1)x^{2n}t^{2n}}{(2n)!} = 
\sec(xt)$$ 
so that 
\begin{equation}\label{Cdiff}
\frac{\partial}{\partial t} C(t,x) = \sec(xt) B(t,x) = 
(\sec(xt))^{1+\frac{1}{x}} \int_0^t (\sec(xz))^{\frac{-1}{x}} dz.
\end{equation}
Our initial condition is that $C(0,x)=1$.  Both Maple and Mathematica 
will solve this differential equation but the final expressions  
are complicated and not particularly useful for enumeration 
purposes. Thus we actually used the RHS of  (\ref{Cdiff}) to find 
the entries of the table for the initial values of 
$C_{2n}(x)$ given in the introduction. Nevertheless, we can 
record the solution of (\ref{Cdiff}) as  
\begin{equation*}\label{Cfin}
C(t,x) = 1+ \int_0^t (\sec(xy))^{1+\frac{1}{x}} 
\int_0^y (\sec(xz))^{\frac{-1}{x}} dz\ dy.
\end{equation*}

\subsection{The generating function $D(t,x)$}

If $\sg = \sg_1 \ldots \sg_{2n+1} \in DU_{2n+1}$, then  
$2n+1$ must occur in one of the positions $1,3, \ldots , 2n+1$.  Let 
$DU_{2n+1}^{(2k+1)}$ denote the set of permutations $\sg \in DU_{2n+1}$ 
such that $\sg_{2k+1} = 2n+1$.  A schematic diagram of an element
in $DU_{2n+1}^{(2k+1)}$ is pictured in Figure \ref{fig:du2k2n+1}.

\fig{du2k2n+1}{The graph of a $\sg \in DU_{2n+1}^{(2k+1)}$.}

Note that there are $\binom{2n}{2k}$ ways 
to pick the elements which occur to the left of position $2k+1$ in such 
$\sg$ and there are $C_{2k}(1) = A_{2k}(1)$ ways to order them since 
the elements to the right of position $2k+1$ form a down-up permutation 
of length $2k$. Each of the elements to the left of position 
$2k+1$ contributes to $\mmp^{(1,0,0,0)}(\sg)$.  Thus the contribution 
of the elements to the left of position $2k+1$ in 
$\sum_{\sg \in DU_{2n+1}^{(2k+1)}} x^{\mmp^{(1,0,0,0)}(\sg)}$ is 
$A_{2k}(1)x^{2k}$.  There are 
$A_{2n-2k}(1)$ ways to order the elements to 
the right of position $2k+1$ since they must form an 
up-down permutation  of length $2n-2k$. Since the elements 
to the left of position $2k+1$ have no effect on whether an element 
to the right of position $2k+1$ contributes to $\mmp^{(1,0,0,0)}(\sg)$, it 
follows that the contribution 
of the elements to the right of position $2k+1$ in 
$\sum_{\sg \in DU_{2n+1}^{(2k+1)}} x^{\mmp^{(1,0,0,0)}(\sg)}$ is 
$A_{2n-2k}(x)$. It thus follows that if $n \geq 1$, then 
\begin{equation*}\label{Drec1}
D_{2n+1}(x) = \sum_{k=0}^n \binom{2n}{2k} A_{2k}(1)x^{2k} A_{2n-2k}(x).
\end{equation*}
Hence for $n\geq 1$, 
\begin{equation}\label{Drec2}
\frac{D_{2n+1}(x)}{(2n)!} = \sum_{k=0}^n \frac{A_{2k}(1)x^{2k}}{(2k)!} 
\frac{A_{2n-2k}(x)}{(2n-2k)!}.
\end{equation}
Multiplying both sides of (\ref{Drec2}) by $t^{2n}$ and summing 
for $n \geq 0$, we see that 
$$\sum_{n \geq 0} \frac{D_{2n+1}(x)t^{2n}}{(2n)!} =  
\left(\sum_{n \geq 0}  \frac{A_{2n}(1)x^{2n}t^{2n}}{(2n)!}\right) 
\left(\sum_{n \geq 0} \frac{A_{2n}(x)t^{2n}}{(2n)!}\right)$$ 
so that 
\begin{equation*}\label{Ddiff}
\frac{\partial}{\partial t} D(t,x) = \sec(x,t) A(t,x) = 
(\sec(xt))^{1+ \frac{1}{x}}.
\end{equation*}
Our initial condition is that $D(0,x)=0$ so that the solution to this differential equation is 
\begin{equation*}\label{Dfin}
D(t,x) = \int_0^t (\sec(xz))^{1+\frac{1}{x}}dz.
\end{equation*}

\subsection{A remark on $MMP(k,0,0,0)$ for $k \geq 2$}\label{sub2-5}

We note that we cannot apply the same techniques to 
find the distribution of marked mesh patterns $MMP(k,0,0,0)$ 
in up-down and down-up permutations when $k \geq 2$. 
For example, suppose that we try to develop a recursion 
for $A^{(2,0,0,0)}_{2n}(x) = \sum_{\sg \in UD_{2n}} 
x^{\mmp^{(2,0,0,0)}(\sg)}$. Then if we consider the permutations 
$\sg = \sg_1 \ldots \sg_{2n} \in UD_{2n}$ such that 
$\sg_{2k} =2n$, we still have $\binom{2n-1}{2k-1}$ ways to 
pick the elements for $\sg_1 \ldots \sg_{2k-1}$.  However, 
in this case the question of whether some $\sg_i$ with 
$i < 2k$ matches the marked mesh pattern $MMP(2,0,0,0)$ in $\sg$ 
is dependent on what values occur in $\sg_{2k+1} \ldots 
\sg_{2n}$. For example, if $2n-1 \in \{\sg_{2k+1}, \ldots , \sg_{2n}\}$, 
then every $\sg_i$ with $i \leq k$ will match the marked mesh 
pattern $MMP(2,0,0,0)$ in $\sg$. However, if $2n-1 \in 
\{\sg_1, \ldots , \sg_{2k-1}\}$, this will not be the case. Thus 
we cannot develop a simple recursion for 
$A^{(2,0,0,0)}_{2n}(x)$. 

\section{The coefficients of the polynomials $A_{2n}(x)$, $B_{2n+1}(x)$, $C_{2n}(x)$, and $D_{2n+1}(x)$.}

The main goal of this section is to explain several of the coefficients of the polynomials $A_{2n}(x)$, $B_{2n+1}(x)$, $C_{2n}(x)$, and $D_{2n+1}(x)$. 
First it is easy to understand the coefficients of the 
lowest power of $x$ in each of these polynomials. That is, 
we have the following theorem, where $0!!=1$ and, for $n \geq 1$, $(2n)!! = \prod_{i=1}^n (2i)$ and 
 $(2n-1)!! = \prod_{i=1}^n (2i-1)$. 

\begin{theorem}\label{thm:base} \ 

\begin{itemize} 
\item[\rm{(1)}] For all $n \geq 1$, 
\begin{equation*} 
A_{2n}(x)|_{x^k} = \begin{cases} 0 & \ \mbox{if $0 \leq k <n$} \\
(2n-1)!! & \ \mbox{if $k=n$}.
\end{cases}
\end{equation*}
\item[\rm{(2)}] For all $n \geq 1$, 
\begin{equation*} 
B_{2n+1}(x)|_{x^k} = \begin{cases} 0 & \ \mbox{if $0 \leq k < n$} \\
(2n)!! & \ \mbox{if $k=n$}.
\end{cases}
\end{equation*}
\item[\rm{(3)}] For all $n \geq 1$, 
\begin{equation*} 
C_{2n}(x)|_{x^k} = \begin{cases} 0 & \ \mbox{if $0 \leq k <n-1$} \\
(2(n-1))!! & \ \mbox{if $k=n-1$}.
\end{cases}
\end{equation*}
\item[\rm{(4)}] For all $n \geq 1$, 
\begin{equation*} 
D_{2n+1}(x)|_{x^k} = \begin{cases} 0 & \ \mbox{if $0 \leq k < n$} \\
(2n-1)!! & \ \mbox{if $k=n$}.
\end{cases}
\end{equation*}
\end{itemize}
\end{theorem}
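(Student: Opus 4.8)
The plan is to prove all four parts by induction on $n$, using the recursions (\ref{Arec2}), (\ref{Brec2}), (\ref{Crec2}), and (\ref{Drec2}) from Section 2 together with one simple observation: in each of these convolution-type recursions the \emph{lowest} power of $x$ is produced by exactly one extreme summand. The combinatorial reason a minimum exists at all is transparent. Since $\mmp^{(1,0,0,0)}(\sg)$ counts the positions $i$ for which $\sg_i$ is not a right-to-left maximum, and since in an up-down permutation of length $2n$ each of the $n$ valleys (the odd positions) has a strictly larger neighbor immediately to its right, every valley necessarily matches $MMP(1,0,0,0)$. Hence $\mmp^{(1,0,0,0)}(\sg)\geq n$ for all $\sg\in UD_{2n}$, which already forces $A_{2n}(x)|_{x^k}=0$ for $k<n$. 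Counting the "forced" valleys in each of the four cases (those possessing a right neighbor) gives exactly the thresholds $n$, $n$, $n-1$, and $n$ appearing in parts (1)--(4).

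For part (1) I would read off the minimal degree of each summand of (\ref{Arec2}). Assuming inductively that $A_{2m}(x)$ has lowest term $(2m-1)!!\,x^{m}$, the $k$-th summand $\binom{2n-1}{2k-1}B_{2k-1}(1)x^{2k-1}A_{2n-2k}(x)$ has lowest degree $(2k-1)+(n-k)=n+k-1$, which is strictly increasing in $k$; thus only $k=1$ contributes to the coefficient of $x^{n}$. That term is $(2n-1)\,x\,A_{2n-2}(x)$, whose $x^{n}$-coefficient is $(2n-1)\cdot(2n-3)!!=(2n-1)!!$, with base case $A_2(x)=x$. This simultaneously shows that the lower powers vanish and pins down the coefficient of $x^{n}$.

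Parts (2)--(4) follow the same template, the only changes being which extreme summand survives and which earlier part is invoked. Part (2) is again self-contained: in (\ref{Brec2}) the $k$-th term has lowest degree $n+k-1$, so only $k=1$ survives, giving $2n\cdot(2n-2)!!=(2n)!!$ (using the boundary value $B_1(x)=1$). Part (3) uses part (2): in (\ref{Crec2}) the $k$-th term has lowest degree $2k+(n-k-1)=n+k-1$, so the surviving term is $k=0$, namely $B_{2n-1}(x)$, whose $x^{n-1}$-coefficient is $(2(n-1))!!$ by part (2). Part (4) uses part (1): in (\ref{Drec2}) the $k$-th term has lowest degree $2k+(n-k)=n+k$, so the surviving term is $k=0$, namely $A_{2n}(x)$, whose $x^{n}$-coefficient is $(2n-1)!!$ by part (1). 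In particular the dependency order is $(1)\to(4)$ and $(2)\to(3)$, with no circularity.

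The argument is essentially routine, so there is no serious obstacle; the only points demanding care are bookkeeping ones. First, one must treat the boundary summands correctly, since $A_0(x)=1$ and $B_1(x)=1$ have lowest power $x^{0}$ and the degenerate index values $n-k=0$ or $n-k-1=0$ must be covered by how the induction is anchored. Second, one must confirm in each recursion that the minimal-degree summand is genuinely unique, i.e. that $n+k-1$ (respectively $n+k$) is strictly monotone in $k$, so that no additional contribution to the lowest coefficient can arise. The double-factorial arithmetic, namely $(2n-1)(2n-3)!!=(2n-1)!!$ and $2n\,(2n-2)!!=(2n)!!$, then closes each induction, and the small tables in the introduction provide a convenient numerical check.
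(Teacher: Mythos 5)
Your proposal is correct and follows essentially the same route as the paper: both rest on the forced-valley lower bound ($n$, $n$, $n-1$, $n$ matches coming from the odd or even positions that have a larger right neighbor) and on the decomposition by the position of the largest entry, with the unique minimizing position ($\sg_2$ maximal for up-down, $\sg_1$ maximal for down-up) supplying the double factorial. The only cosmetic difference is that you extract the extremal coefficient from the Section 2 convolution recursions by a minimal-degree-of-summand argument, whereas the paper re-runs the corresponding combinatorial induction directly on permutations; the arithmetic $(2n-1)(2n-3)!!=(2n-1)!!$ and $2n\,(2n-2)!!=(2n)!!$ is identical.
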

\begin{proof}

For (1), note that if $\sg = \sg_1 \ldots \sg_{2n} \in UD_{2n}$, then 
$\sg_{2i+1}$ matches the pattern $MMP(1,0,0,0)$ for $i =0, \ldots, n-1$. 
Thus $mpp^{(1,0,0,0)}(\sg) \geq n$. We now proceed by induction 
to prove that $A_{2n}(x)|_{x^n} = (2n-1)!!$ for all $n \geq 1$. This 
is clear for $n=1$ since $A_2(x) = x$.
Now suppose that $\sg = \sg_1 \ldots \sg_{2n} \in UD_{2n}$ and  
$mpp^{(1,0,0,0)}(\sg)=n$. It is then easy to see that it 
must be the case that $\sg_2 = 2n$ (otherwise $\sg_2$ is an unwanted occurrence of the pattern $MMP(1,0,0,0)$). Moreover, if 
$\tau = \red[\sg_3 \ldots \sg_{2n}]$, then $\tau \in UD_{2n-2}$ and 
$mmp^{(1,0,0,0)}(\tau) =n-1$.  Thus since we are assuming by 
induction that $A_{2n-2}(x)|_{x^{n-1}} = (2n-3)!!$, we have $2n-1$ choices of $\sg_1$ and $(2n-3)!!$ choices for  
$\tau$.  Hence $A_{2n}(x)|_{x^{n}} = (2n-1)!!$.

For (2), note that if $\sg = \sg_1 \ldots \sg_{2n+1} \in UD_{2n+1}$, then 
$\sg_{2i+1}$ matches the pattern $MMP(1,0,0,0)$ for $i =0, \ldots, n-1$. 
Thus $mpp^{(1,0,0,0)}(\sg) \geq n$. We now proceed by induction 
to prove that $B_{2n+1}(x)|_{x^n} = (2n)!!$ for all $n \geq 1$. This 
is clear for $n=1$ since $B_3(x) = 2x$.
Now suppose that $\sg = \sg_1 \ldots \sg_{2n+1} \in UD_{2n+1}$ and  
$mpp^{(1,0,0,0)}(\sg)=n$. It is then easy to see that it 
must be the case that $\sg_2 = 2n+1$. Moreover if, 
$\tau = \red[\sg_3 \ldots \sg_{2n+1}]$, then $\tau \in UD_{2n-1}$ and 
$mmp^{(1,0,0,0)}(\tau) =n-1$.  Thus since we are assuming by 
induction that $B_{2n-1}(x)|_{x^{n-1}} = (2n-2)!!$, we have 
$2n$ choices of $\sg_1$ and $(2n-2)!!$ choices for  
$\tau$.  Hence $B_{2n+1}(x)|_{x^{n}} = (2n)!!$ for $n \geq 1$.

For (3), note that if $\sg = \sg_1 \ldots \sg_{2n} \in DU_{2n}$, then 
$\sg_{2i}$ matches the pattern $MMP(1,0,0,0)$ for $i =1, \ldots, n-1$. 
Thus $mpp^{(1,0,0,0)}(\sg) \geq n-1$. 
Suppose that $mpp^{(1,0,0,0)}(\sg)=n-1$. It is then easy to see that it 
must be the case that $\sg_1 = 2n$. Moreover, if
$\tau = \sg_2 \ldots \sg_{2n}$, then $\tau \in UD_{2n-1}$ and 
$mmp^{(1,0,0,0)}(\tau) =n-1$.  Thus  we have 
$(2(n-1))!!$ choices for  
$\tau$ by part (2).  Hence $C_{2n}(x)|_{x^{n-1}} = (2(n-1))!!$.

For (4), note that if $\sg = \sg_1 \ldots \sg_{2n+1} \in DU_{2n+1}$, then 
$\sg_{2i}$ matches $MMP(1,0,0,0)$ for $i =1, \ldots, n$. 
Thus $mpp^{(1,0,0,0)}(\sg) \geq n$. 
Suppose that $mpp^{(1,0,0,0)}(\sg)=n$. It is then easy to see that it 
must be the case that $\sg_1 = 2n+1$. Moreover, if 
$\tau = \sg_2 \ldots \sg_{2n+1}$, then $\tau \in UD_{2n}$ and 
$mmp^{(1,0,0,0)}(\tau) =n$.  Thus  we have 
$(2n-1)!!$ choices for  
$\tau$ by part (1).  Hence $D_{2n+1}(x)|_{x^{n}} = (2n-1)!!$ for $n \geq 1$. 
\end{proof}

We can easily explain the coefficients of the highest power of 
$x$ in each of the polynomials $A_{2n}(x)$, $B_{2n+1}(x)$, $C_{2n}(x)$, 
and $D_{2n+1}(x)$.  That is, we have the following proposition.

\begin{proposition}\label{highest} \

\begin{itemize}
\item[\rm{(1)}] For all $n \geq 1$, the highest power of $x$ that appears in 
$A_{2n}(x)$ is $x^{2n-1}$ which appears with coefficient 
$B_{2n-1}(1)$. 

\item[\rm{(2)}]  For all $n \geq 1$, the highest power of $x$ that appears in 
$B_{2n+1}(x)$ is $x^{2n-1}$ which appears with coefficient 
$(2n)B_{2n-1}(1)$. 

\item[\rm{(3)}]  For all $n \geq 1$, the highest power of $x$ that appears in 
$C_{2n}(x)$ is $x^{2n-2}$ which appears with coefficient 
$(2n-1)A_{2n-2}(1)$. 

\item[\rm{(4)}]  For all $n \geq 1$, the highest power of $x$ that appears in 
$D_{2n+1}(x)$ is $x^{2n}$ which appears with coefficient 
$A_{2n}(1)$.
\end{itemize}
\end{proposition}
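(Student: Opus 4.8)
The plan is to read each highest-degree coefficient straight off the four position-of-the-maximum recursions derived in Section 2. The crucial soft fact is that every one of $A_{2n}(x)$, $B_{2n+1}(x)$, $C_{2n}(x)$, $D_{2n+1}(x)$ is a generating polynomial for $\mmp^{(1,0,0,0)}$ and so has non-negative coefficients; consequently no cancellation can occur, and the leading term of each polynomial is produced by the single summand of largest degree in its recursion. I would therefore prove parts (1) and (2) first, by induction on $n$: in the inductive step I extract the top summand of the recursion for $A_{2n}$ (resp. $B_{2n+1}$), using the inductively known degrees of the smaller $A$'s (resp. $B$'s) to certify that this summand alone realizes the maximal degree, thereby reading off both the degree and the leading coefficient. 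Parts (3) and (4) then follow immediately by the same extraction, now feeding in the degrees of the $B$'s and $A$'s established in parts (2) and (1) and using $C_{2k}(1) = A_{2k}(1)$.

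Explicitly, for part (1) the recursion $A_{2n}(x) = \sum_{k=1}^n \binom{2n-1}{2k-1} B_{2k-1}(1) x^{2k-1} A_{2n-2k}(x)$ has, for $1 \le k \le n-1$, a $k$th summand of degree $(2k-1)+(2(n-k)-1)=2n-2$ (using $\deg A_{2(n-k)}=2(n-k)-1$), whereas the $k=n$ summand, built from $A_0=1$, has degree $2n-1$; hence $\deg A_{2n}=2n-1$ with leading coefficient $\binom{2n-1}{2n-1}B_{2n-1}(1)=B_{2n-1}(1)$. The remaining parts are the same computation. For part (2), $B_{2n+1}(x)=\sum_{k=1}^n \binom{2n}{2k-1}B_{2k-1}(1)x^{2k-1}B_{2n-2k+1}(x)$ has summands of degree $2n-2$ for $k\le n-1$ and a $k=n$ summand (from $B_1=1$) of degree $2n-1$, giving leading coefficient $\binom{2n}{2n-1}B_{2n-1}(1)=(2n)B_{2n-1}(1)$. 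For part (3), $C_{2n}(x)=\sum_{k=0}^{n-1}\binom{2n-1}{2k}A_{2k}(1)x^{2k}B_{2n-2k-1}(x)$ has summands of degree $2n-3$ for $k\le n-2$ and a $k=n-1$ summand (from $B_1=1$) of degree $2n-2$, giving leading coefficient $\binom{2n-1}{2n-2}A_{2n-2}(1)=(2n-1)A_{2n-2}(1)$. Finally, for part (4), $D_{2n+1}(x)=\sum_{k=0}^n\binom{2n}{2k}A_{2k}(1)x^{2k}A_{2n-2k}(x)$ has summands of degree $2n-1$ for $k\le n-1$ and a $k=n$ summand (from $A_0=1$) of degree $2n$, giving leading coefficient $\binom{2n}{2n}A_{2n}(1)=A_{2n}(1)$.

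The one point demanding care---really the only obstacle---is the degree bookkeeping: I must check in each sum that the designated extremal summand ($k=n$ for $A$, $B$, $D$ and $k=n-1$ for $C$) strictly dominates every other summand in degree, so that the leading coefficient is a single binomial times a value at $x=1$ rather than a sum of several contributions. This is exactly where the inductive degree hypotheses enter, which is why parts (1) and (2) must be settled before (3) and (4); the base cases $A_2=x$, $B_3=2x$ are immediate from the tables. As an independent check I would also verify the coefficients combinatorially: $\sigma_i$ matches $MMP(1,0,0,0)$ precisely when it is not a right-to-left maximum, so maximizing $\mmp^{(1,0,0,0)}$ amounts to minimizing the number of right-to-left maxima, and the minimizers in each family are exactly the alternating permutations whose largest entry occupies the final peak of the shape; enumerating these completions recovers $B_{2n-1}(1)$, $(2n)B_{2n-1}(1)$, $(2n-1)A_{2n-2}(1)$, and $A_{2n}(1)$, respectively.
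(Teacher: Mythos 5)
Your proof is correct, but it takes a genuinely different route from the paper's. The paper argues directly and combinatorially: since $\sg_i$ fails to match $MMP(1,0,0,0)$ exactly when $\sg_i$ is a right-to-left maximum, the statistic $\mmp^{(1,0,0,0)}$ is maximized precisely when the largest value sits in the last admissible peak position ($\sg_{2n}=2n$ for $UD_{2n}$, $\sg_{2n}=2n+1$ for $UD_{2n+1}$, $\sg_{2n-1}=2n$ for $DU_{2n}$, $\sg_{2n+1}=2n+1$ for $DU_{2n+1}$), and the four coefficients are then read off by counting the completions --- this is essentially the ``independent check'' you relegate to your final sentence. You instead run an induction on $n$ through the four position-of-the-maximum recursions of Section 2, using non-negativity of the coefficients to rule out cancellation and isolating the unique summand of top degree ($k=n$, resp.\ $k=n-1$) in each sum. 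Your degree bookkeeping is right in all four cases, and the non-cancellation observation is exactly what legitimizes reading the leading term of the sum off the single extremal summand. What your route buys is uniformity: the answer falls out of recursions already established, and the degree bounds $2n-1$, $2n-1$, $2n-2$, $2n$ are certified along the way without separately identifying the extremal permutations. What the paper's route buys is brevity and a sharper picture --- a two-line identification of exactly which permutations attain the maximum. In the end the two arguments are two views of the same fact, since the extremal summand in each recursion corresponds precisely to the extremal placement of the largest entry.
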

\begin{proof}
For (1), it is easy to see  that $mmp^{(1,0,0,0)}(\sg)$ is maximized for 
a $\sg = \sg_1 \ldots \sg_{2n} \in UD_{2n}$ when $\sg_{2n}=2n$. 
In such a case $mmp^{(1,0,0,0)}(\sg) = 2n-1$ and 
$\sg_1 \ldots \sg_{2n-1}$ can be any element of $UD_{2n-1}$. 

For (2), it is easy to see that $mmp^{(1,0,0,0)}(\sg)$ is maximized for 
a $\sg = \sg_1 \ldots \sg_{2n+1} \in UD_{2n+1}$ when $\sg_{2n}=2n+1$. 
In such a case $mmp^{(1,0,0,0)}(\sg) = 2n-1$. We then 
have $2n$ choices for $\sg_{2n+1}$  and 
$\red[\sg_1 \ldots \sg_{2n-1}]$ can be any element of $UD_{2n-1}$. 
Thus $B_{2n+1}(x)|_{x^{2n-1}} = (2n)B_{2n-1}(1)$.

For (3), it is easy to see  that $mmp^{(1,0,0,0)}(\sg)$ is maximized for 
a $\sg = \sg_1 \ldots \sg_{2n} \in DU_{2n}$ when $\sg_{2n-1}=2n$. 
In such a case $mmp^{(1,0,0,0)}(\sg) = 2n-2$. 
We then 
have $2n-1$ choices for $\sg_{2n}$  and 
$\red[\sg_1 \ldots \sg_{2n-2}]$ can be any element of $DU_{2n-2}$. 
Thus $C_{2n}(x)|_{x^{2n-2}} = (2n-1)C_{2n-2}(1) = (2n-1)A_{2n-2}(1)$.

For (4), it is easy to see that $mmp^{(1,0,0,0)}(\sg)$ is maximized for 
a $\sg = \sg_1 \ldots \sg_{2n+1} \in DU_{2n+1}$ when $\sg_{2n+1}=2n+1$. 
In such a case $mmp^{(1,0,0,0)}(\sg) = 2n$. Then 
$\sg_1 \ldots \sg_{2n}$ can be any element of $DU_{2n}$. 
Thus $D_{2n+1}(x) |_{x^{2n}} = C_{2n}(1) = A_{2n}(1)$.

\end{proof}

\subsection{Recursions on up-down permutations of even length}

By Theorem \ref{thm:base}, the lowest power of $x$ that appears 
with a non-zero coefficient in $A_{2n}(x)$ is $x^n$.  Next we 
consider $A_{2n}(x)|_{x^{n+k}}$ for fixed $k$. That is, 
we let 
$$A_{2n}^{=n+k} = |\{\sg \in UD_{2n}:mmp^{(1,0,0,0)}(\sg) =n +k\}|$$
for fixed $k \geq 1$. 
Our goal is to show that $A_{2n}^{=n+k} = p_k(n) (2n-1)!!$ for some 
fixed polynomial $p_k(n)$ in $n$.  That is, we shall prove 
the following theorem, where we let $$(x)\downarrow_n = 
x(x-1) \cdots (x-n+1) \mbox{ if } n \geq 1 \mbox{ and } (x)\downarrow_0 =1.$$

 \begin{theorem}\label{thm:A+k}
There is a sequence of polynomials $p_0(x),p_1(x), \ldots $ 
such that for all $k \geq 0$,  
\begin{equation*}\label{eq:A+k}
A_{2n}^{=n+k} = p_k(n)(2n-1)!! \ \mbox{for all }n \geq k+1.
\end{equation*}
Moreover for $k \geq 1$, the values $p_k(n)$ are defined by the recursion 
\begin{equation}\label{Apkrec}
p_k(n) = \frac{B_{2k+1}(1)}{(2k+1)!!}+ \sum_{j=1}^k \sum_{t=k+2}^n 
\frac{B_{2j+1}(1)2^j(t-1)\downarrow_j}{(2j+1)!}p_{k-j}(t-j-1)
\end{equation}
where $p_0(x) =1$.
\end{theorem}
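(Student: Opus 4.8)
The plan is to argue by induction on $k$, extracting the coefficient of $x^{n+k}$ from the master recursion for $A_{2n}(x)$ derived in Section 2.1. The base case $k=0$ is exactly Theorem~\ref{thm:base}(1), which gives $A_{2n}^{=n}=(2n-1)!!$, so $p_0(n)=1$. For the inductive step I would rewrite that recursion with summation index $j$,
$$A_{2n}(x)=\sum_{j=1}^n\binom{2n-1}{2j-1}B_{2j-1}(1)\,x^{2j-1}A_{2n-2j}(x),$$
and read off the coefficient of $x^{n+k}$ on both sides. Since $A_{2m}(x)$ has lowest term $x^m$ by Theorem~\ref{thm:base}, the $j$-th summand contributes $A_{2n-2j}^{=(n-j)+(k-j+1)}$, which vanishes unless $k-j+1\ge 0$; hence only $j=1,\dots,k+1$ survive and
$$A_{2n}^{=n+k}=\sum_{j=1}^{k+1}\binom{2n-1}{2j-1}B_{2j-1}(1)\,A_{2n-2j}^{=(n-j)+(k-j+1)}.$$

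The key manipulation is to split off the $j=1$ term, which equals $(2n-1)A_{2n-2}^{=(n-1)+k}$ and is the only surviving term still at ``level $k$''. Writing $a_n:=A_{2n}^{=n+k}$, this yields a first-order recursion $a_n=(2n-1)a_{n-1}+f(n)$ whose forcing term $f(n)=\sum_{j=2}^{k+1}\binom{2n-1}{2j-1}B_{2j-1}(1)A_{2n-2j}^{=(n-j)+(k-j+1)}$ involves only offsets $k-j+1\le k-1$. For $n\ge k+2$ the inductive hypothesis applies to each such term, giving $A_{2n-2j}^{=(n-j)+(k-j+1)}=p_{k-j+1}(n-j)(2n-2j-1)!!$. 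Dividing the recursion by $(2n-1)!!$ and using $(2n-1)!!=(2n-1)(2n-3)!!$ collapses the leading term, so that $q(n):=a_n/(2n-1)!!$ satisfies $q(n)=q(n-1)+g(n)$ with $g(n)=f(n)/(2n-1)!!$. The routine but essential computation here is to simplify $\binom{2n-1}{2j-1}(2n-2j-1)!!/(2n-1)!!$; writing each odd factorial via $(2m-1)!=(2m-1)!!(2m-2)!!$ and then $(2m)!!=2^m m!$, and shifting the summation index down by one, turns $g(n)$ into exactly $\sum_{j=1}^{k}\frac{B_{2j+1}(1)2^j(n-1)\downarrow_j}{(2j+1)!}\,p_{k-j}(n-j-1)$, the inner summand of (\ref{Apkrec}).

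It then remains to solve the telescoping recursion and pin down the base value. Summing $q(n)=q(n-1)+g(n)$ from the first valid index gives $q(n)=q(k+1)+\sum_{t=k+2}^n g(t)$, and the base value is supplied directly rather than by the recursion: Proposition~\ref{highest}(1) states that the top coefficient of $A_{2k+2}(x)$ is $B_{2k+1}(1)$, so $a_{k+1}=A_{2k+2}^{=2k+1}=B_{2k+1}(1)$ and $q(k+1)=B_{2k+1}(1)/(2k+1)!!$. Substituting this together with the expression for $g(t)$ reproduces (\ref{Apkrec}) verbatim and defines $p_k(n)$ for all $n\ge k+1$. Polynomiality then follows by the same induction: each $p_{k-j}$ is a polynomial and $(t-1)\downarrow_j$ is polynomial in $t$, so each summand is polynomial in $t$, and summing a polynomial over $t\in\{k+2,\dots,n\}$ returns a polynomial in $n$; hence $p_k$ is a polynomial.

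The main obstacle I expect is boundary bookkeeping rather than any conceptual difficulty. One must check that the substitution of the inductive formula into $f(n)$ is legitimate precisely on the range $n\ge k+2$ (since $A_{2m}^{=m+\ell}=p_\ell(m)(2m-1)!!$ requires $m\ge \ell+1$ for each $\ell=k-j+1$), that the single point $n=k+1$ is correctly fixed by Proposition~\ref{highest} and matches $p_k(k+1)=B_{2k+1}(1)/(2k+1)!!$, and that the double-factorial/binomial reduction lands exactly on the constants $B_{2j+1}(1)2^j/(2j+1)!$ and the falling factorial $(t-1)\downarrow_j$ claimed in (\ref{Apkrec}). Once these index ranges and normalizations are verified, the remainder is a routine telescoping.
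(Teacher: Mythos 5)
Your proposal is correct and follows essentially the same route as the paper: the same truncated recursion $A_{2n}^{=n+k}=\sum_{j=1}^{k+1}\binom{2n-1}{2j-1}B_{2j-1}(1)\,A_{2(n-j)}^{=(n-j)+k-j+1}$, the same splitting off of the $j=1$ term, division by $(2n-1)!!$, and telescoping from the base point $n=k+1$, with the same polynomiality argument. The only cosmetic differences are that you obtain the recursion by extracting the coefficient of $x^{n+k}$ from the generating-function recursion of Section 2.1 (using Theorem \ref{thm:base} to kill the terms with $j>k+1$) and cite Proposition \ref{highest}(1) for $p_k(k+1)=B_{2k+1}(1)/(2k+1)!!$, whereas the paper re-derives both facts combinatorially from the position of the maximum.
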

\begin{proof}
We proceed by induction on $k$. For $k=0$, we know 
by Theorem \ref{thm:base} that $A_{2n}^{=n} = (2n-1)!!$ for all 
$n \geq 1$ so that we can let $p_0(x) =1$. 

Now assume that $k \geq 1$ and the theorem is true for $s < k$.
That is, assume that for $0 \leq s <k$, there is a polynomial 
$p_s(x)$ such that for $n \geq s+1$, 
$A_{2n}^{=n+s}= p_s(n)(2n-1)!!$.

It is easy to see that for $\sg = \sg_1 \ldots \sg_{2n} \in UD_{2n}$, 
$mmp^{(1,0,0,0)}(\sg) > n+k$ if $\sg_{2j}=2n$ where  $j \geq  k+2$ 
because then $\sg_2, \sg_4,\ldots, \sg_{2k+2}$ as well as 
$\sg_{2i+1}$ such that $i =0, \ldots, n-1$ will match the pattern 
$MMP(1,0,0,0)$ in $\sg$. Thus if 
$mmp^{(1,0,0,0)}(\sg) = n+k$, then 
$2n \in \{\sg_2,\sg_4, \ldots, \sg_{2k+2}\}$. 
Now suppose that $j \leq k+1$ and $\sg_{2j} =2n$. 
Then we have $\binom{2n-1}{2j-1}$ ways to choose the elements 
$\sg_1, \ldots, \sg_{2j-1}$ and we have $B_{2j-1}(1)$ ways 
to order them. Then we know that 
$\sg_i$ matches the marked mesh pattern $MMP(1,0,0,0)$ in 
$\sg$ for $i$ odd and for $i \in \{2,4, \ldots, 2j-2\}$. 
Hence, it must be the case that 
$mmp^{(1,0,0,0)}(\red[\sg_{2j+1} \ldots \sg_{2n}]) = 
n-j+k-j+1$.  Thus it follows that for $n \geq k+1$, 
\begin{equation}\label{Ankrec}
A_{2n}^{=n+k} = \sum_{j=1}^{k+1} \binom{2n-1}{2j-1} 
B_{2j-1}(1)A_{2(n-j)}^{=(n-j)+k-j+1}.
\end{equation}
Now define 
$p_k(n) = \frac{A_{2n}^{=n+k}}{(2n-1)!!}$ for $n \geq k+1$. 
Note that $A_{2k+2}^{(k+1)+k} = B_{2k+1}(1)$ since 
for a $\tau = \tau_1 \ldots \tau_{2k+2} \in UD_{2k+2}$ to 
have $mmp^{(1,0,0,0)}(\tau) =2k+1$, it must be the case 
that $\tau_{2k+2}=2k+2$ and, hence, we have 
$  B_{2k+1}(1)$ choices for $\tau_1 \ldots \tau_{2k+1}$. Hence, 
$p_k(k+1) = \frac{ B_{2k+1}(1)}{(2k+1)!!}$.

We can rewrite (\ref{Ankrec}) as 
\begin{eqnarray}\label{2Ankrec}
p_k(n) (2n-1)!! &=& (2n-1) p_k(n-1) (2n-3)!! + \nonumber \\
&& \sum_{j=2}^{k+1} \frac{\prod_{i=0}^{2j-2} (2n-1-j)}{(2j-1)!}
B_{2j-1}(1) p_{k-j+1}(n-j) 
(2n-2j-1)!!.
\end{eqnarray}
Dividing (\ref{2Ankrec}) by $(2n-1)!!$, we obtain that 
\begin{eqnarray*}\label{3Ankrec}
p_k(n) -  p_k(n-1)  &=& 
\sum_{j=2}^{k+1} \frac{B_{2j-1}(1)\prod_{s=1}^{j-1} (2n-2s)}{(2j-1)!}p_{k-j+1}(n-j) \nonumber \\
&=& \sum_{j=1}^k \frac{B_{2j+1}(1)2^j (n-1)\downarrow_j}{(2j+1)!}p_{k-j}
(n-j-1).
\end{eqnarray*}
Hence for $n \geq k+1$,
\begin{eqnarray*}  
p_k(n) -p_k(k+1) &=& \sum_{t=k+2}^n p_k(t)-p_k(t-1) \\
&=&  \sum_{t=k+2}^n 
\sum_{j=1}^k \frac{B_{2j+1}(1) 2^j (t-1)\downarrow_j}{(2j+1)!} p_{k-j}(t-j-1).
\end{eqnarray*}
It follows that for $n \geq k+1$, 
\begin{equation*}\label{4Ankrec}
p_k(n) = \frac{ B_{2k+1}(1)}{(2k+1)!!}+ \sum_{j=1}^k \sum_{t=k+2}^n  \frac{B_{2j+1}(1)2^j (t-1)\downarrow_j}{(2j+1)!}p_{k-j}(t-j-1).
\end{equation*}
This proves (\ref{Apkrec}).

Since $p_s(x)$ is a polynomial for $s <k$, it is easy to see 
that 
$$\sum_{t=k+2}^n  \frac{B_{2j+1}(1)2^j (t-1)\downarrow_j}{(2j+1)!}p_{k-j}(t-j-1)$$ is a polynomial in $n$ for $j=1, \ldots ,k$.  Thus 
$p_k(n)$ is a polynomial in $n$. 
\end{proof} 

One can use Mathematica and (\ref{Apkrec}) to compute the first 
few expressions for $p_k(n)$.  For example, we have computed 
that 
\begin{eqnarray*}
p_0(n) &=& 1, \\
p_1(n) &=& \frac{2}{3} \binom{n}{2}, \\
p_2(n) &=& \frac{n(2+7n-14n^2+5n^3)}{90}, \ \mbox{and} \\
p_3(n) &=& \frac{n(192-478n+213n^2+227n^3-198n^4+35n^5)}{5670}.
\end{eqnarray*}

\subsection{Recursions on up-down permutations of odd length}

\begin{theorem}\label{thm:B+k}
There is a sequence of polynomials $q_0(x),q_1(x), \ldots $ 
such that for all $k \geq 0$,  
\begin{equation*}\label{eq:B+k}
B_{2n+1}^{=n+k} = q_k(n)(2n)!! \ \mbox{for all }n \geq k+1.
\end{equation*}
Moreover for $k \geq 1$, the values $q_k(n)$ are defined by the recursion 
\begin{equation}\label{Bpkrec}
q_k(n) = \frac{B_{2k+1}(1)}{(2k)!!}+ \sum_{j=1}^k \sum_{t=k+2}^n 
\frac{B_{2j+1}(1)\prod_{s=0}^{j-1} (2t-1-2s)}{(2j+1)!}q_{k-j}(t-j-1)
\end{equation}
where $q_0(x) =1$.
\end{theorem}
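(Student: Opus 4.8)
The plan is to run the same induction-on-$k$ argument used for Theorem~\ref{thm:A+k}, transplanted from even-length to odd-length up-down permutations. Here $B_{2n+1}^{=n+k}$ denotes $|\{\sg \in UD_{2n+1} : \mmp^{(1,0,0,0)}(\sg) = n+k\}|$. For the base case $k=0$, Theorem~\ref{thm:base}(2) already gives $B_{2n+1}^{=n} = (2n)!!$ for all $n \geq 1$, so I set $q_0(x)=1$. For the inductive step I fix $k \geq 1$ and assume the statement (both the polynomiality and the existence of $q_s$) for all $s < k$.

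Next I would derive the structural recursion by conditioning on the position of the maximum. If $\sg = \sg_1 \ldots \sg_{2n+1} \in UD_{2n+1}$, then $2n+1$ sits in some even position $2j$. Each of the $2j-1$ entries to its left automatically matches $MMP(1,0,0,0)$ (the global maximum lies in their quadrant~I), the left block is an arbitrary up-down permutation on a chosen $(2j-1)$-subset of the remaining values, and, exactly as in the generating-function computation of Section~2, whether a right-block entry matches $MMP(1,0,0,0)$ depends only on the right block, so $\red[\sg_{2j+1}\ldots\sg_{2n+1}] \in UD_{2(n-j)+1}$ carries its matches intrinsically. If $\mmp^{(1,0,0,0)}(\sg)=n+k$ with $j \geq k+2$, the $2j-1 \geq 2k+3$ forced left matches together with the at-least-$(n-j)$ right matches total at least $(2j-1)+(n-j)=n+j-1>n+k$, a contradiction; hence $j \leq k+1$. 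Since the right block must carry $(n+k)-(2j-1)=(n-j)+(k-j+1)$ matches, this yields
\begin{equation*}
B_{2n+1}^{=n+k} = \sum_{j=1}^{k+1} \binom{2n}{2j-1} B_{2j-1}(1)\, B_{2(n-j)+1}^{=(n-j)+(k-j+1)}.
\end{equation*}

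Setting $q_k(n)=B_{2n+1}^{=n+k}/(2n)!!$, I would divide this recursion by $(2n)!!$, using $(2(n-j))!!/(2n)!! = 1/\bigl(2^j (n)\downarrow_j\bigr)$. The $j=1$ term becomes $2n\cdot q_k(n-1)/(2n)=q_k(n-1)$ since $B_1(1)=1$, and after reindexing the remaining sum by $j \mapsto j+1$ the one genuinely new computation is the factorization
\begin{equation*}
\frac{\binom{2n}{2j+1}}{2^{\,j+1}\,(n)\downarrow_{j+1}} = \frac{\prod_{s=0}^{j-1}(2n-1-2s)}{(2j+1)!},
\end{equation*}
obtained by splitting the $(2j+1)$-fold falling product $(2n)(2n-1)\cdots(2n-2j)$ into its even factors $2^{\,j+1}(n)\downarrow_{j+1}$ and its odd factors $\prod_{s=0}^{j-1}(2n-1-2s)$. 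This collapses the recursion to the first-difference form $q_k(n)-q_k(n-1)=\sum_{j=1}^k \frac{B_{2j+1}(1)\prod_{s=0}^{j-1}(2n-1-2s)}{(2j+1)!}\,q_{k-j}(n-j-1)$. Telescoping from the base value $q_k(k+1)=B_{2k+1}(1)/(2k)!!$ then gives precisely (\ref{Bpkrec}); I verify this base value directly from Proposition~\ref{highest}(2), which asserts $B_{2k+3}^{=2k+1}=(2k+2)B_{2k+1}(1)$, so that $q_k(k+1)=(2k+2)B_{2k+1}(1)/(2k+2)!!=B_{2k+1}(1)/(2k)!!$. Finally, polynomiality of $q_k(n)$ follows because, by the induction hypothesis, each $q_{k-j}(t-j-1)$ is a polynomial in $t$, hence each inner summand is a polynomial in $t$ and the telescoped sum over $t$ is a polynomial in $n$.

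The main obstacle is keeping the parity bookkeeping consistent: the odd length $2n+1$, the even positions available to the maximum, and the normalization by $(2n)!!$ all differ from the even-length case, and they must conspire to produce $\prod_{s=0}^{j-1}(2t-1-2s)$ and $(2k)!!$ rather than the $(t-1)\downarrow_j$ and $(2k+1)!!$ of Theorem~\ref{thm:A+k}. The even/odd factor split of $\binom{2n}{2j+1}$ displayed above, which replaces the split of $\binom{2n-1}{2j-1}$ in the even case, is the key algebraic step where this difference is resolved.
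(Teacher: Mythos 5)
Your argument is correct and is essentially the paper's own proof: the same conditioning on the even position $2j$ of the maximum, the same recursion $B_{2n+1}^{=n+k} = \sum_{j=1}^{k+1}\binom{2n}{2j-1}B_{2j-1}(1)B_{2(n-j)+1}^{=(n-j)+k-j+1}$, the same normalization $q_k(n)=B_{2n+1}^{=n+k}/(2n)!!$ with base value $q_k(k+1)=B_{2k+1}(1)/(2k)!!$, and the same first-difference-plus-telescoping conclusion. Your even/odd factor split of $\binom{2n}{2j+1}$ yields the coefficient $\prod_{s=0}^{j-1}(2n-1-2s)/(2j+1)!$ exactly as it appears in (\ref{Bpkrec}), which is in fact cleaner than the paper's displayed intermediate steps (these carry a spurious $2^j$ and a shifted product, apparently copied from the even-length case).
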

\begin{proof}
We proceed by induction on $k$. For $k=0$, we know 
by Theorem \ref{thm:base} that $B_{2n+1}^{=n} = (2n)!!$ for all 
$n \geq 1$ so that we can let $q_0(x) =1$. 

Now assume that $k \geq 1$ and the theorem is true for $s < k$.
That is, assume that for $0 \leq s <k$, there is a polynomial 
$q_s(x)$ such that for $n \geq s+1$, 
$B_{2n+1}^{=n+s}= q_s(n)(2n)!!$.

We can argue as in Theorem \ref{thm:A+k} that  if 
$mmp^{(1,0,0,0)}(\sg) = n+k$, then 
$2n \in \{\sg_2,\sg_4, \ldots, \sg_{2k+2}\}$. 
Now suppose that $j \leq k+1$ and $\sg_{2j} =2n$. 
Then we have $\binom{2n}{2j-1}$ ways to choose the elements 
$\sg_1, \ldots, \sg_{2j-1}$ and we have $B_{2j-1}(1)$ ways 
to order them. Then we know that 
$\sg_i$ matches the marked mesh pattern $MMP(1,0,0,0)$ in 
$\sg$ for $i \in \{2,4, \ldots, 2j-2\} \cup \{1,3,\ldots,2n-1\}$. Hence, 
it must be the case that 
 $mmp^{(1,0,0,0)}(\red[\sg_{2j+1} \ldots \sg_{2n+1}]) = 
n-j+k-j+1$.  Thus it follows that for $n \geq k+2$, 
\begin{equation}\label{Bnkrec}
B_{2n+1}^{=n+k} = \sum_{j=1}^{k+1} \binom{2n}{2j-1} 
B_{2j-1}(1)B_{2(n-j)+1}^{=(n-j)+k-j+1}.
\end{equation}
Now define 
$q_k(n) = \frac{B_{2n+1}^{=n+k}}{(2n)!!}$ for $n \geq k+1$. 
Note that $B_{2k+3}^{(k+1)+k} = (2k+2)B_{2k+1}(1)$ since 
for a $\tau = \tau_1 \ldots \tau_{2k+3} \in UD_{2k+3}$ to 
have $mmp^{(1,0,0,0)}(\tau) =2k+1$, it must be the case 
that $\tau_{2k+2}=2k+3$ and, hence, we have $2k+2$ choices 
for $\tau_{2k+3}$ and 
$  B_{2k+1}(1)$ choices for $\tau_1 \ldots \tau_{2k+1}$. Thus, 
$q_k(k+1) = \frac{ (2k+2)B_{2k+1}(1)}{(2k+2)!!} = \frac{B_{2k+1}(1)}{(2k)!!}$.

We can rewrite (\ref{Bnkrec}) as 
\begin{eqnarray}\label{2Bnkrec}
q_k(n) (2n)!! &=& (2n) q_k(n-1) (2n-2)!! + \nonumber \\
&&\sum_{j=2}^{k+1} \frac{\prod_{i=0}^{2j-2} (2n-j)}{(2j-1)!}
B_{2j-1}(1) q_{k-j+1}(n-j) 
(2n-2j)!!.
\end{eqnarray}
Dividing (\ref{2Bnkrec}) by $(2n)!!$, we obtain that 
\begin{equation}\label{3Bnkrec}
q_k(n) -  q_k(n-1)  = 
\sum_{j=2}^{k+1} \frac{B_{2j-1}(1)\prod_{s=1}^{j-1} (2n-2s-1)}{(2j-1)!}q_{k-j+1}(n-j).
\end{equation}
Hence for $n \geq k+2$,
\begin{eqnarray*}  
q_k(n) -q_k(k+1) &=& \sum_{t=k+2}^n q_k(t)-q_k(t-1) \\
&=&  \sum_{t=k+2}^n 
\sum_{j=1}^k \frac{B_{2j+1}(1) 2^j \prod_{s=1}^{j-1} (2n-2s-1)}{(2j+1)!} q_{k-j}(t-j-1).
\end{eqnarray*}
It follows that for $n \geq k+1$, 
\begin{equation}\label{4Bnkrec}
q_k(n) = \frac{B_{2k+1}(1)}{(2k)!!}+ \sum_{j=1}^k \sum_{t=k+2}^n  \frac{B_{2j+1}(1)2^j \prod_{s=1}^{j-1} (2n-2s-1)}{(2j+1)!}q_{k-j}(t-j-1).
\end{equation}
This proves (\ref{Bpkrec}).

Since $q_s(x)$ is a polynomial for $s <k$, it is easy to see 
that \\
$\sum_{t=k+2}^n  \frac{B_{2j+1}(1)\prod_{s=1}^{j-1} (2n-2s-1)}{(2j+1)!}q_{k-j}(t-j-1)$ is a polynomial in $n$ for $j=1, \ldots ,k$.  Thus 
$q_k(n)$ is a polynomial in $n$. 
\end{proof} 

One can use Mathematica and (\ref{Bpkrec}) to compute the first 
few examples of $q_k(n)$.  For example, we have computed 
that 
\begin{eqnarray*}
q_0(n) &=& 1, \\
q_1(n) &=& \frac{n^2-1}{3}, \\
q_2(n) &=& \frac{(n-2)(n-1)(5n^2+n-3)}{90}, \ \mbox{and} \\
q_3(n) &=& \frac{35n^6-84n^5-193n^4+345n^3+140n^2-81n+198}{5670}.
\end{eqnarray*}

\subsection{Recursions on down-up permutations}

Similar results hold for down-up permutations. 

\begin{theorem}\label{thm:CD+k}
There are sequences of polynomials $r_0(x),r_1(x), \ldots $ and 
$s_0(x),s_1(x), \ldots $
such that for all $k \geq 0$,  
\begin{equation}\label{eq:C+k}
C_{2n}^{=n-1+k} = r_k(n)(2n-2)!! \ \mbox{for all }n \geq k+1.
\end{equation}
and 
\begin{equation}\label{eq:D+k}
D_{2n+1}^{=n-1+k} = s_k(n)(2n-1)!! \ \mbox{for all }n \geq k+1.
\end{equation}
\end{theorem}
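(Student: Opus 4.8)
The plan is to run the same ``position of the largest entry'' analysis used in the proofs of Theorems \ref{thm:A+k} and \ref{thm:B+k}, while exploiting a feature special to down-up permutations: once the maximum is deleted, the entries lying to its right form an \emph{up-down} permutation, so the refined count on that suffix is already controlled by the polynomials $p_s$ and $q_s$ furnished by Theorems \ref{thm:A+k} and \ref{thm:B+k}. Consequently neither (\ref{eq:C+k}) nor (\ref{eq:D+k}) needs its own induction on $k$; each follows from a single direct computation.

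First I would record the match-count decomposition. If $\sg = \sg_1 \ldots \sg_{2n} \in DU_{2n}$ and $\sg_{2j+1} = 2n$, then each of the $2j$ entries to the left of position $2j+1$ matches $MMP(1,0,0,0)$ (the maximum lies in its first quadrant), the maximum itself matches nothing, and whether an entry to the right matches depends only on the reduced suffix $\tau = \red[\sg_{2j+2} \ldots \sg_{2n}] \in UD_{2(n-j)-1}$; hence $\mmp^{(1,0,0,0)}(\sg) = 2j + \mmp^{(1,0,0,0)}(\tau)$. The identical statement holds for $\sg \in DU_{2n+1}$ with $\sg_{2j+1} = 2n+1$, now with $\tau \in UD_{2(n-j)}$. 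Since the minimum value of $\mmp^{(1,0,0,0)}$ on $UD_\ell$ equals $\lfloor \ell/2 \rfloor$ (parts (1) and (2) of Theorem \ref{thm:base}), demanding $\mmp^{(1,0,0,0)}(\sg) = n-1+k$ forces the maximum into an early odd position: $0 \le j \le k$ in the even case and $0 \le j \le k-1$ in the odd case.

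Next I would condition on $j$. There are $\binom{2n-1}{2j}$ (respectively $\binom{2n}{2j}$) ways to choose the values placed to the left, $A_{2j}(1)=C_{2j}(1)$ ways to arrange them into a down-up prefix, and the suffix supplies a refined up-down count, giving the exact recursions
\begin{equation*}
C_{2n}^{=n-1+k} = \sum_{j=0}^{k} \binom{2n-1}{2j} A_{2j}(1)\, B_{2(n-j)-1}^{=(n-j-1)+(k-j)}
\end{equation*}
and
\begin{equation*}
D_{2n+1}^{=n-1+k} = \sum_{j=0}^{k-1} \binom{2n}{2j} A_{2j}(1)\, A_{2(n-j)}^{=(n-j)+(k-1-j)}.
\end{equation*}
Substituting the formulas of Theorems \ref{thm:B+k} and \ref{thm:A+k} for the suffix counts and pulling out the global double factorial, the decisive simplification is that the binomial coefficient absorbs the ratio of double factorials into a polynomial prefactor: one checks that $\binom{2n-1}{2j}(2n-2j-2)!!/(2n-2)!!$ and $\binom{2n}{2j}(2n-2j-1)!!/(2n-1)!!$ are each a polynomial in $n$ of degree $j$ (a product of $j$ linear factors divided by $(2j)!$). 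Because each $q_{k-j}$ and $p_{k-1-j}$ is a polynomial, the resulting finite sums define polynomials $r_k(n)$ and $s_k(n)$, establishing (\ref{eq:C+k}) and (\ref{eq:D+k}). Note that the $D$-sum is empty when $k=0$, so $s_0 \equiv 0$, consistent with part (4) of Theorem \ref{thm:base}.

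I expect the main obstacle to be bookkeeping rather than ideas: fixing the precise summation range from the minimum-value bound, verifying the double-factorial cancellation, and handling the boundary of the asserted range $n \ge k+1$. For (\ref{eq:D+k}) the suffix index satisfies $m=s+2$, so the substitution via Theorem \ref{thm:A+k} applies to every term already for $n \ge k+1$ and the polynomial identity is immediate. For (\ref{eq:C+k}) the substitution applies verbatim only for $n \ge k+2$, while the single case $n = k+1$ must be reconciled by hand: there only the term with the maximum in the last odd position survives (the other suffixes would demand their unattainable maximal value of $\mmp^{(1,0,0,0)}$), giving $C_{2k+2}^{=2k} = (2k+1)A_{2k}(1)$, which one checks agrees with $r_k(k+1)$, equivalently with the identity $q_s(s)=0$. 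This mirrors the base-case computations $A_{2k+2}^{=(k+1)+k}=B_{2k+1}(1)$ and $B_{2k+3}^{=(k+1)+k}=(2k+2)B_{2k+1}(1)$ in the earlier proofs.
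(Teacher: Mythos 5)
Your proof is correct and is essentially the paper's own argument: condition on the odd position $2j+1$ of the maximum, observe that the deleted-maximum suffix is up-down so its refined counts are already supplied by Theorems \ref{thm:A+k} and \ref{thm:B+k}, and check that $\binom{2n-1}{2j}(2n-2j-2)!!/(2n-2)!!$ and $\binom{2n}{2j}(2n-2j-1)!!/(2n-1)!!$ are polynomials of degree $j$ in $n$; this yields exactly the closed forms (\ref{rkdef}) and (\ref{skdef}). Two small points of divergence, both traceable to the paper rather than to you: (i) your conclusion $s_0\equiv 0$ comes from reading the exponent $n-1+k$ in (\ref{eq:D+k}) literally, whereas the paper's proof works with $D_{2n+1}^{=n+k}$ and gets $s_0\equiv 1$ (the exponent in the statement appears to be a typo), so your $s_k$ is the paper's $s_{k-1}$ --- harmless, but worth reconciling; (ii) you are right that the substitution of Theorem \ref{thm:B+k} into the $C$-recursion is only licensed for $n\geq k+2$, and your reduction of the remaining case $n=k+1$ to the identity $q_s(s)=0$ is the correct fix, but that identity is a genuine claim about the polynomial $q_s$ evaluated outside its defining range and should not be left at ``one checks'': it follows by a short induction from the difference recursion $q_k(n)-q_k(n-1)=\sum_{j=2}^{k+1}\frac{B_{2j-1}(1)\prod_{s=1}^{j-1}(2n-2s+1)}{(2j-1)!}q_{k-j+1}(n-j)$ evaluated at $n=k+1$ together with $q_k(k+1)=B_{2k+1}(1)/(2k)!!$. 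The paper silently glosses over this boundary case, so on this point your write-up is the more careful of the two.
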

\begin{proof}
By Theorem \ref{thm:base},  
$C_{2n}^{= n-1} =(2n-2)!!$ and $D_{2n+1}^{= n} =(2n-1)!!$ for all $n \geq 1$. 
Thus we can let $r_0(x) = s_0(x) =1$.

For a permutation $\sg = \sg_1 \ldots \sg_{2n} \in DU_{2n}$ to have 
$mmp^{(1,0,0,0)}(\sg) = n-1+k$, it must be the case that 
$2n \in \{\sg_1, \sg_3, \ldots, \sg_{2k+1}\}$.  If $\sg_{2j+1}=2n$ 
where $j \in \{0,1, \ldots, k\}$, then there are $\binom{2n-1}{2j}$ ways to pick 
the elements of $\sg_1 \ldots \sg_{2j}$ and $C_{2j}(1)$ ways to order 
them.  Then $\red[\sg_{2j+2} \ldots \sg_{2n}] \in UD_{2(n-j-1)+1}$ and 
must have $n-1+k-(2j)$ matches of $MMP(1,0,0,0)$. Thus we have 
$B_{2(n-j-1)+1}^{=(n-j-1)+k-j}$ ways to order $\sg_{2j+2} \ldots \sg_{2n}$. 
It follows that for $n \geq k+1$, 
\begin{equation}\label{Cnkrec1}
C_{2n}^{=n-1+k} = \sum_{j=0}^k \binom{2n-1}{2j} C_{2j}(1) B_{2(n-j-1)+1}^{
= n-j-1 +k-j}.
\end{equation}
But $C_{2j}(1) = A_{2j}(1)$ and $B_{2(n-j-1)+1}^{=n-j-1 +k-j} = 
(2(n-j-1))!! q_{k-j}(n-j-1)$. Thus for $n \geq k+1$, 
\begin{eqnarray*}\label{Cnkrec2}
C_{2n}^{=n-1+k} &=& \sum_{j=0}^k \binom{2n-1}{2j}A_{2j}(1) 
(2(n-j-1))!! q_{k-j}(n-j-1) \nonumber \\
&=& (2n-2)!! \sum_{j=0}^k \frac{A_{2j}(1) \prod_{s=1}^j (2n+1-2s)}{(2j)!}  
(2(n-j-1))!! q_{k-j}(n-j-1).
\end{eqnarray*}
Thus $C_{2n}^{=n-1+k} = (2n-2)!!r_k(n)$ where 
\begin{equation}\label{rkdef}
r_k(n) = \sum_{j=0}^k \frac{A_{2j}(1) \prod_{s=1}^j (2n+1-2s)}{(2j)!} q_{k-j}(n-j-1).
\end{equation}

A similar argument will show that for $n \geq k+1$,
\begin{equation*}\label{Dnkrec1}
D_{2n+1}^{= n+k} = \sum_{j=0}^k \binom{2n}{2j}C_{2j}(1) A_{2(n-j)}^{=n-j+k-j}.
\end{equation*}
Since $A_{2(n-j)}^{= n-j+k-j} = (2(n-j)-1)!!p_{k-j}(n-j)$, we obtain 
that 
\begin{eqnarray*}\label{Dnkrec2}
D_{2n+1}^{= n+k} &=& \sum_{j=0}^k \binom{2n}{2j} A_{2j}(1) 
(2(n-j)-1)!! p_{k-j}(n-j) \nonumber \\
&=& (2n-1)!! \sum_{j=0}^k \frac{A_{2j}(1) \prod_{s=1}^j (2n+2-2s)}{(2j)!} 
p_{k-j}(n-j).
\end{eqnarray*}
Thus $D_{2n+1}^{=n+k} = (2n-2)!!s_k(n)$ where 
\begin{equation}\label{skdef}
s_k(n) = \sum_{j=0}^k \frac{A_{2j}(1) \prod_{s=1}^j (2n+2-2s)}{(2j)!} 
p_{k-j}(n-j).
\end{equation}
\end{proof}

One can use (\ref{rkdef}) and (\ref{skdef}) to compute $r_k(n)$ and $s_k(n)$ 
for the first few values of $k$.  For example, we have 
that 
\begin{eqnarray*}
r_0(n) &=& 1, \\
r_1(n) &=& \frac{2n^2+2n-3}{6}, \\
r_2(n) &=& \frac{20n^4+24n^3 -128n^2-12n+45}{360}, \ \mbox{and} \\
r_3(n) &=& \frac{280n^6+168n^5-4820n^4 +3168n^3+8734n^2-6702n+2835}{45360}.
\end{eqnarray*}
Similarly, we have 
\begin{eqnarray*}
s_0(n) &=& 1, \\
s_1(n) &=& \frac{n(n+2)}{3}, \\
s_2(n) &=& \frac{n(5n^3 +16n^2-68n+47)}{90}, \ \mbox{and} \\
s_3(n) &=& \frac{n(35n^5+126n^4 -340n^3-417n^2+656n-60)}{5760}.
\end{eqnarray*}

\section{Conclusion}

In this paper, we have shown that one can find 
the generating functions for the distribution of 
the quadrant 
marked mesh patterns $MMP(1,0,0,0)$,  $MMP(0,1,0,0)$, $MMP(0,0,1,0)$, 
and $MMP(0,0,0,1)$ in both up-down and down-up permutations by 
proving simple recursions based on the position of the largest 
element in a permutation.  As noted in Subsection \ref{sub2-5}, 
these simple type of recursions no longer hold for the distribution 
of the quadrant 
marked mesh patterns $MMP(k,0,0,0)$,  $MMP(0,k,0,0)$, $MMP(0,0,k,0)$, 
and $MMP(0,0,0,k)$ in both up-down and down-up permutations when 
$k \geq 2$.  However, our techniques can be used to study 
the distribution of other quadrant marked mesh patterns in 
up-down and down-up permutations. For example, in \cite{kitrem2}, 
we have proved similar recursions based on the position of the smallest 
element in a  permutation to 
study the distribution of the quadrant marked mesh patterns 
$MMP(1,0,\emptyset,0)$,  $MMP(0,1,0,\emptyset)$, $MMP(\emptyset,0,1,0)$, 
and $MMP(0,\emptyset,0,1)$ in both up-down and down-up permutations. 
In this case, the recursions are a bit more subtle and the 
corresponding generating functions  are not always as simple  
as the results of this paper. 
For example, if 
we let 
\begin{eqnarray*}
A^{(1,0,\emptyset,0)}(x,t) &=& 
1+ \sum_{n \geq 1} \frac{t^{2n}}{(2n)!}\sum_{\sg \in UD_{2n}} 
x^{\mmp^{(1,0,\emptyset,0)}(\sg)} \ \mbox{and} \\
B^{(1,0,\emptyset,0)}(x,t) &=& 
\sum_{n \geq 0} \frac{t^{2n+1}}{(2n+1)!}\sum_{\sg \in UD_{2n+1}} 
x^{\mmp^{(1,0,\emptyset,0)}(\sg)},
\end{eqnarray*}
then we can show that 
\begin{eqnarray*}
A^{(1,0,\emptyset,0)}(t,x) &=& (\sec(t))^x, \\
B^{(1,0,\emptyset,0)}(t,x) &=& 
\frac{\sin(t)\cos(t)(1-x+\sec(t))}{x+(1-x)\cos(t)} \times \\
&& \ \ \ \left( 
(1-x)\ {}_2F_1 \left(\frac{1}{2},\frac{1+x}{2};\frac{3}{2};\sin\left(t^2\right)\right) + 
x\  {}_2F_1 \left(\frac{1}{2},\frac{2+x}{2};\frac{3}{2};\sin\left(t^2\right)\right)\right) 
\end{eqnarray*}
where ${}_2F_1(a,b;c;z)= \sum_{n=0}^\infty \frac{(a)_n(b)_n}{(c)_n} 
\frac{z^n}{n!}$ and 
$(x)_n = x(x-1) \cdots (x-n+1)$ if $n \geq 1$ and $(x)_0 =1$.

There are several directions for further research that are 
suggested by the results of this paper. First, one 
can study the distribution in up-down and down-up permutations 
of other quadrant marked meshed patterns $MMP(a,b,c,d)$ in 
the case where $a,b,c,d \in \{\emptyset,1\}$. More generally,  
one can study the distribution of 
quadrant marked mesh patterns on other classes of pattern-restricted permutations such as 2-{\em stack-sortable permutations} or {\em vexillary permutations}
(see \cite{kit} for definitions of these) and many other permutation classes having nice properties. Finally, we conjecture  
that the polynomials $A_{2n}(x)$, $B_{2n+1}(x)$, $C_{2n}(x)$, and 
$D_{2n+1}(x)$ are unimodal for all $n \geq 1$. This is certainly true 
for small values of $n$.

\end{document}